\author{Julien Melleray}
\address{Universit\'e Claude Bernard -- Lyon 1 \\
  Institut Camille Jordan, CNRS UMR 5208 \\
  43 boulevard du 11 novembre 1918 \\
  69622 Villeurbanne Cedex \\
  France}
\numberwithin{equation}{section}
\newcommand{\Coll}{\bm{\mathrm{Col}}}
\newcommand{\topp}{\mathrm{top}}
\newcommand{\base}{\mathrm{base}}
\title{Generic properties of homeomorphisms preserving a given dynamical simplex}
\begin{document}
\begin{abstract}
Given a dynamical simplex $K$ on a Cantor space $X$, we consider the set $G_K^*$ of all homeomorphisms of $X$ which preserve all elements of $K$ and have no nontrivial clopen invariant subset. Generalising a theorem of Yingst, we prove that for a generic element $g$ of $G_K^*$ the set of invariant measures of $g$ is equal to $K$. We also investigate when there exists a generic conjugacy class in $G_K^*$ and prove that this happens exactly when $K$ has only one element, which is the unique invariant measure associated to some odometer; and that in that case the conjugacy class of this odometer is generic in $G_K^*$.

\end{abstract}
\maketitle


\section{Introduction}

This work is motivated by a recent article of Yingst \cite{Yingst2020}, concerning the generic behavior (in the sense of Baire category) of some homeomorphisms of the Cantor space, which here and throughout this paper we denote by $X$. We extend some results of \cite{Yingst2020} to the setting of dynamical simplices; before explaining what those are, let us give some context.

Yingst considers some Bernoulli measures $\mu_r$, i.e. measures on $\{0,1\}^\N$ which are a countable product of measures $r \delta_0 +(1-r) \delta_1$ for some $r \in \, ]0,1[$. Say that such a measure is \emph{refinable} if for any two disjoint clopen subsets $A_1,A_2$ and any clopen subset $B$ such that $\mu(A_1)+\mu(A_2)=\mu(B)$, there exist disjoint clopen subsets $\tilde A_1$, $\tilde A_2$ of $B$ such that $\mu(A_1)=\mu(\tilde A_1)$ and $\mu(A_2)=\mu(\tilde A_2)$.
Refinability of a full atomless measure is a weakening of the notion of \emph{good measure}, considered by Akin \cite{Akin2005} : a Borel probability measure $\mu$ on $X$ is \emph{good} if it is full, atomless and, for any clopen $A,B$ such that $\mu(A) \le \mu(B)$ there exists a clopen $\tilde A \subseteq B$ such that $\mu(\tilde A)=\mu(A)$. Glasner and Weiss \cite{Glasner1995a} proved that, whenever $g$ is a strictly ergodic homeomorphism of $X$ (i.e.~$g$ is minimal and with a unique invariant Borel probability measure), the unique $g$-invariant measure is good. Akin \cite{Akin2005} established the converse implication: for any good measure $\mu$ on $X$, there exists a strictly ergodic homeomorphism $g$ of $X$ whose unique invariant Borel probability measure is $\mu$.

Dougherty, Mauldin and Yingst \cite{Dougherty2007} showed that $\mu_r$ is refinable if and only if $r$ is a root of an integer polynomial $P$ with $P(0)=\pm 1$, $P(1)=\pm 1$; $\mu_r$ is good if and only if it is refinable and $r$ has no algebraic conjugate in $]0,1[$. An interesting phenomenon is that when $\mu_r$ is refinable, $s \in \, ]0,1[$ is an algebraic conjugate of $r$ and $g$ is a homeomorphism of $X$ such that $g_* \mu_r=\mu_r$, then also $g_* \mu_s=\mu_s$. Given Akin's theorem, one wonders whether, for any refinable Bernoulli measure $\mu_r$, there exists a minimal homeomorphism $g$ of $X$ whose set of ergodic invariant Borel probability measures is equal to $\{\mu_s  \colon s \textrm{ is an algebraic conjugate of s in } \, ]0,1[\}$; this holds true, and that is one of the main results of \cite{Yingst2020}.

To prove this result, Yingst uses Baire category techniques. Given a compact, convex subset $K$ of the set of all Borel probability measures on $X$, denote 
\[G_K = \left\{ g \in \textrm{Homeo}(X) \colon \forall \mu \in K \ g_* \mu=\mu\right \} \] 
and let $G_K^*$ be the set of all $g \in G_K$ such that $g(U) \ne U$ for any nontrivial clopen set $U$. As pointed out in \cite{Yingst2020}, $G_K^*$ consists of the chain-transitive elements of $G_K$.

Then $G_K$ is a closed subgroup of the homeomorphism group of $X$, endowed with its unique Polish group topology; a neighborhood basis of the identity in $G_K$ is given by the subgroups 
\[ G_\mcA =\{g \in G_K \colon \forall A \in \mcA \ g(A)=A\} \]
where $\mcA$ ranges over all clopen partitions of $X$. Mapping a given clopen set to itself is an open condition in $G_K$, so $G_K^*$ is a closed subset of $G_K$; Yingst points out that $G_{\{\mu\}}^*$ is nonempty whenever $\mu$ is a refinable Bernoulli measure (this is true in greater generality, as we will see below). 

\begin{theorem*}[{\cite[Theorem~1.1]{Yingst2020}}]
Let $P$ be an irreducible integer polynomial with $P(0)\pm1$ and $P(1)=\pm 1$. let $R$ be the set of roots of $P$ which lie in $]0,1[$, and let $r \in R$.

Then $G_{\{\mu_r\}}^*$ is nonempty and for a generic element $g$ of $G_{\{\mu_r\}}^*$ the set of all $g$-invariant probability measures is equal to the closed convex hull of $\{\mu_s \colon s \in R\}$.

Similarly, assume that $\mu$ is a good measure on $X$. Then $G_{\{\mu\}}^*$ is nonempty and for a generic element $g$ of $G_{\{\mu\}}^*$, the set of all $g$-invariant probability measures is equal to $\{\mu\}$.
\end{theorem*}

The results above, for refinable Bernoulli measures and for good measures, have a common generalization. Before stating it, we need to introduce some more background. 

\begin{defn*}[see \cite{Ibarlucia2016} and \cite{Melleray2018}]
A compact, convex set $K$ of Borel probability measures on $X$ is a \emph{dynamical simplex} if:
\begin{itemize}
\item All elements of $K$ are full and atomless.
\item For any two $A,B \in \Clopen(X)$ such that $\mu(A)< \mu(B)$ for all $\mu \in K$, there exists a clopen $\tilde A \subset B$ such that $\mu(A)=\mu(\tilde A)$ for all $\mu \in K$.
\end{itemize}
\end{defn*}

It follows from a theorem of Glasner and Weiss \cite[Lemma~2.5]{Glasner1995a} that the set of invariant measures of any minimal homeomorphism of $X$ is a dynamical simplex.
When $K$ is a singleton $\{\mu\}$, $K$ is a dynamical simplex if, and only if, $\mu$ is a good measure. 

Assuming that $K$ has finitely many extreme points which are mutually singular, and $K$ is a dynamical simplex, Dahl \cite{Dahl2008} proved that there exists a minimal homeomorphism $g$ of $X$ whose invariant measures are exactly the elements of $K$ (the term \emph{dynamical simplex} was also introduced by Dahl, though the definition was not quite formulated as above). This extends Akin's theorem about good measures. It turns out that the assumption that extreme points of $K$ are mutually singular is redundant with the other assumptions, and that finite dimensionality may also be dispensed with. Indeed, it was proved by Ibarluc\'ia and the author in \cite{Ibarlucia2016} (with the additional assumption of \emph{approximate divisibility}, which was later shown to be superflous in \cite{Melleray2018} but will play a part in our arguments) that for any dynamical simplex $K$ there exists a minimal homeomorphism whose invariant measures are exactly the elements of $K$. This converse of the general result of Glasner and Weiss is thus an extension of Akin's converse which applies in the special case of uniquely ergodic systems.

Theorem 5.4 of \cite{Yingst2020} (closely related to earlier work of Dougherty, Mauldin and Yingst \cite{Dougherty2007}) shows that, whenever $P$ is an irreducible integer polynomial with $P(0)=\pm 1$, $P(1)=\pm 1$, $R$ is the set of roots of $P$ contained in $]0,1[$ and $r \in R$, the closed convex hull $K_r$ of all $\mu_s$, for $s \in R$, is a dynamical simplex; further, $G_{\{\mu_r\}}=G_{K_r}$ for all $r \in R$. Thus Yingst's theorem may be seen as asserting that, for certain dynamical simplices $K$, for a generic element $g \in G_K^*$ the set of all $g$-invariant Borel probability measures coincides with $K$. As an aside, note that any $g \in G_K \setminus G_K^*$, or more generally any $g \in G_K$ which is not minimal, must preserve some measure which does not have a full support, so the set of $g$-invariant Borel probability measures cannot be equal to $K$.

We prove here (Theorem \ref{t:generic_saturated} below) that the conclusion of Yingst's theorem holds for all dynamical simplices: whenever $K$ is a dynamical simplex, a generic element of $G_K^*$ has a set of invariant measures equal to $K$. We actually establish a more precise result, using the notion of a \emph{saturated} homeomorphism.

The strategy of \cite{Ibarlucia2016} is based on the notion of a \emph{Kakutani-Rokhlin partition}; we recall that, if $g$ is a homeomorphism of $X$, a Kakutani-Rokhlin partition for $g$ is a clopen partition $(A_{i,j})_{0 \le i \le n, 0 \le j \le n_i}$ such that for all $i$ and all $0\le j< n_i$ one has $g(A_{i,j})=A_{i,j+1}$. When $g$ is minimal, one can build a refining sequence of such partitions whose atoms generate the clopen Boolean algebra (see for instance \cite{Ibarlucia2016} for more details and further references), and then we can think of these partitions as giving better and better approximations of $g$. The proof given in \cite{Ibarlucia2016} that any dynamical simplex can be realized as the set of invariant measures of some minimal homeomorphism works by building a refining sequence of clopen partitions, which turn out to be Kakutani-Rokhlin partitions of the desired minimal homeomorphism $g$. These partitions, which we call $K$-partitions, also play a major part in our approach here (see the next section for their definition). To ensure that $g$ preserves $K$, we ask that $\mu(A_{i,j})=\mu(A_{i,j+1})$ for any $\mu \in K$ whenever $A_{i,j}$ is an element of one of the Kakutani-Rokhlin partitions for $g$ and $j<n_i$. One also needs to ensure that $g$ does not preserve measures which are not in $K$; to that end, one works with a stronger notion than minimality, namely with \emph{saturated} homeomorphisms. We skip the formal definition for now, see the next section for details. What matters here is that, 
 whenever $K$ is a dynamical simplex and $g$ is $K$-saturated, $g$ is minimal and the set of invariant measures of $g$ is equal to $K$; and the construction of \cite{Ibarlucia2016} produces a $K$-saturated element of $G_K$. 

One sees from their definition that $K$-saturated homeomorphisms form a $G_\delta$ subset of $G_K^*$. It is asked in \cite[Remark 2]{Ibarlucia2016} whether one can determine the closure in $G_K$ of the set of $K$-saturated homeomorphisms, and suggested that it might be equal to $G_K^*$, by analogy with \cite[Theorem~5.9]{Bezuglyi2006} which proves a similar result in $\Homeo(X)$. 
Our main result (proved in Section \ref{s:proof_main}) confirms this suspicion.

\begin{thm:saturated}
Let $K$ be a dynamical simplex. Then a generic element of $G_K^*$ is $K$-saturated. 
In particular, for a generic element $g$ of $G_K^*$ the set of $g$-invariant Borel probability measures is equal to $K$.
\end{thm:saturated}

This generalises Yingst's result for good measures and refinable Bernoulli measures to all dynamical simplices; the possibility that such a generalization may be true is mentioned at the end of \cite{Yingst2020}. 

Yingst mentions after the statement of his theorem that it \enquote{demonstrates a large class of uniquely or finitely ergodic homeomorphisms}; one then wonders how large that class is. Certainly, different dynamical simplices will yield non-conjugate uniquely or finitely ergodic homeomorphisms (when there is just one, or finitely many, extreme points), but it is not clear a priori whether two generic elements of $G_K^*$ are conjugate. Thus we investigate when there exist comeager conjugacy classes in $G_K^*$ and establish the following result.

\begin{thm:comeager}
There exists a comeager conjugacy class in $G_K^*$ if, and only if $K$ is a singleton $\{\mu\}$, and $\mu$ is the unique invariant probability measure for some odometer $g$. In that case, the conjugacy class of $g$ is comeager in $G_K^*$.
\end{thm:comeager}

\noindent \emph{Acknowledgements.} I am grateful to A. Yingst for a careful reading of an earlier version of this article, as well as many valuable suggestions which helped improve the exposition. Thanks are also due to the anonymous referee for several useful corrections and suggestions.

\section{Some background on dynamical simplices}
Our  argument is based on some terminology and results from \cite{Ibarlucia2016} and \cite{Melleray2018}, which we recall now (what we call "$K$-partition" here is called "KR-partition" there). It probably helps to have some familarity with Kakutani-Rokhlin partitions and the methods used in the aforementioned papers to follow the arguments, but we recall everything we need from these articles in order to establish our results. 

We recall that a dynamical simplex is a compact, convex set $K$ of probability measures on $X$ such that all elements of $K$ are atomless and have full support, and whenever $A,B \in \Clopen(X)$ are such that $\mu(A)< \mu(B)$ for all $\mu \in K$ there exists a clopen $\tilde A \subset B$ such that $\mu(A)=\mu(\tilde A)$ for all $\mu \in K$. 

We fix a dynamical simplex $K$.
Let 
\[G_K=\{g \in \Homeo(X) \colon \forall \mu \in K \ g_* \mu=\mu\} \]
where as usual $g_* \mu(A)=\mu(g^{-1}A)$ for any Borel $A$. We endow $G_K$ with the topology whose base of open neighborhhods of $1$ is given by sets of the form 
\[ \{g \in G_K \colon \forall A \in \mcA \ g(A)=A\}\]
where $\mcA$ ranges over all clopen partitions of $X$. This turns $G_K$ into a Polish group.

\begin{lemma}[{\cite[Proposition~2.7]{Ibarlucia2016}}]
Assume that $A,B$ are clopen and $\mu(A)=\mu(B)$ for all $\mu \in K$. Then there exists $g \in G_K$ such that $g(A)=B$. 
\end{lemma}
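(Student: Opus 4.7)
First I would reduce to the case where $A$ and $B$ are disjoint. Set $C=A\setminus B$ and $D=B\setminus A$; both are clopen, and the hypothesis gives $\mu(C)=\mu(D)$ for every $\mu\in K$. If I can produce a homeomorphism $h\colon C\to D$ such that $h_*(\mu|_C)=\mu|_D$ for every $\mu\in K$, then defining $g$ to be $h$ on $C$, $h^{-1}$ on $D$, and the identity on $X\setminus(C\cup D)=(A\cap B)\cup(X\setminus(A\cup B))$ yields a homeomorphism of $X$ lying in $G_K$ and satisfying $g(A)=B$.

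The plan for building $h$ is a standard back-and-forth on clopen Boolean algebras. Fix enumerations $(U_n)$ and $(V_n)$ of the countable clopen algebras of $C$ and of $D$. Inductively construct refining clopen partitions $\mcC_n=\{C_1^n,\ldots,C_{k_n}^n\}$ of $C$ and $\mcD_n=\{D_1^n,\ldots,D_{k_n}^n\}$ of $D$ with $\mu(C_i^n)=\mu(D_i^n)$ for every $i$ and every $\mu\in K$, arranging at odd steps that $U_n$ is a union of atoms of $\mcC_n$ and at even steps that $V_n$ is a union of atoms of $\mcD_n$. The induced bijection between atoms is compatible with refinement, hence in the limit gives an isomorphism between the clopen algebras of $C$ and $D$ preserving every $\mu\in K$; by Stone duality this is realized by a homeomorphism $h\colon C\to D$ with the required property.

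The one nontrivial step is the splitting move. To refine $\mcC_n$ so that $U_n$ appears as a union of atoms, I must split each cell $C_i^n$ into $U_n\cap C_i^n$ and $C_i^n\setminus U_n$, and then split $D_i^n$ into corresponding pieces with the matching $K$-measures. If $U_n\cap C_i^n$ is empty or equals $C_i^n$ there is nothing to do. Otherwise $C_i^n\setminus U_n$ is a nonempty clopen set, and here the fullness assumption built into the definition of a dynamical simplex does the crucial work: every $\mu\in K$ has full support, so $\mu(C_i^n\setminus U_n)>0$, which means $\mu(U_n\cap C_i^n)<\mu(C_i^n)=\mu(D_i^n)$ for \emph{every} $\mu\in K$. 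This is exactly the strict inequality required to apply the defining property of $K$ inside $D_i^n$, yielding a clopen $\tilde D_i\subset D_i^n$ with $\mu(\tilde D_i)=\mu(U_n\cap C_i^n)$ for all $\mu\in K$; then $\mu(D_i^n\setminus\tilde D_i)=\mu(C_i^n\setminus U_n)$ automatically. The same argument, with the roles of $C$ and $D$ swapped, handles the even steps.

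The potential obstacle is the apparent possibility of a non-strict inequality, for which the defining axiom of a dynamical simplex gives no direct information; as explained above, this is neutralized by fullness, which upgrades $\le$ to $<$ whenever the complement inside a cell is a nonempty clopen. The only remaining work is the routine bookkeeping of the back-and-forth, which I would not spell out in detail.
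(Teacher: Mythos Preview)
Your argument is correct. The reduction to the disjoint case via $C=A\setminus B$ and $D=B\setminus A$ is clean, and the back-and-forth on the clopen Boolean algebras is carried out properly; your observation that fullness converts the apparent $\le$ into a strict $<$ (so that the defining property of a dynamical simplex applies) is exactly the point that makes the inductive splitting step go through.

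As for comparison: the present paper does not give a proof of this lemma at all---it simply cites it as \cite[Proposition~2.7]{Ibarlucia2016}. Your back-and-forth is the standard route to this result and is essentially the argument one finds in that reference, so there is nothing to contrast.
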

This property is key to our constructions. If $\mu(A)< \mu(B)$ for all $\mu \in K$, then it follows from this fact and the definition of a dynamical simplex that there exists $g \in G_K$ such that $g(A) \subset B$.

Basic compactness arguments enable one to prove the following facts.

\begin{lemma}[{\cite[Proposition~2.5]{Ibarlucia2016}}]\label{l:intersection} \ 
\begin{itemize}
\item For any nonempty $A \in \Clopen(X)$, $\inf \{\mu(A) \colon  \mu \in K\} >0$. 

\item[•] If $d$ is any compatible metric, and $\varepsilon >0$, there exists some $\delta>0$ such that whenever $B$ has diameter less than $\delta$, one has $\mu(B) \le \varepsilon$ for all $\mu \in K$.
\end{itemize}
\end{lemma}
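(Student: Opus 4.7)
The lemma has two parts, and my plan is to treat each by combining compactness of $K$ with one of the standing properties of its elements (full support for the first bullet, atomlessness for the second).

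For the first bullet, I would argue that for any clopen set $A$ the indicator function $\chi_A$ is continuous on $X$, so the map $\mu \mapsto \mu(A) = \int \chi_A \, d\mu$ is continuous on the space of Borel probability measures equipped with the weak-$*$ topology that makes $K$ compact. Consequently the restriction of this map to $K$ attains its infimum at some $\mu_0 \in K$. Since $\mu_0$ has full support by hypothesis and $A$ is a nonempty open set, $\mu_0(A) > 0$, and therefore $\inf_{\mu \in K} \mu(A) = \mu_0(A) > 0$.

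For the second bullet I would proceed by contradiction. Suppose some $\varepsilon > 0$ admits, for every $n$, a Borel set $B_n \subseteq X$ of diameter less than $1/n$ and a measure $\mu_n \in K$ with $\mu_n(B_n) > \varepsilon$. Pick any point $x_n \in B_n$; by compactness of $X$, pass to a subsequence so that $x_n \to x \in X$, and by compactness of $K$ pass to a further subsequence so that $\mu_n \to \mu \in K$ weak-$*$. Then for any $r > 0$, eventually $B_n$ lies in the closed ball $\bar B(x,r)$, so the weak-$*$ upper-semicontinuity of measure on closed sets yields
\[
\mu(\bar B(x,r)) \;\ge\; \limsup_n \mu_n(\bar B(x,r)) \;\ge\; \limsup_n \mu_n(B_n) \;\ge\; \varepsilon.
\]
Letting $r \to 0$ and using continuity of measure from above gives $\mu(\{x\}) \ge \varepsilon$, contradicting the fact that every element of $K$ is atomless.

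The arguments are short, so there is no serious obstacle; the only subtlety is matching the direction of semicontinuity correctly in the second bullet (closed sets behave well for the $\limsup$, open sets for the $\liminf$), which is why the reduction to a closed ball around the limit point $x$ is the right form of the contradiction. If one preferred a more symmetric presentation, one could phrase the argument in terms of a shrinking neighborhood basis of $x$ and appeal directly to regularity of $\mu$, but the proof structure would be identical.
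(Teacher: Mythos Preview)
Your argument is correct and is precisely the sort of ``basic compactness argument'' the paper alludes to; the paper does not supply its own proof but simply cites \cite[Proposition~2.5]{Ibarlucia2016}, so your weak-$*$ continuity/semicontinuity plus compactness of $K$ and $X$ is exactly the intended reasoning.
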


Below, we will make constant use of the homogeneity properties of dynamical simplices; the following lemma will be used in the proof of our main result (Theorem \ref{t:generic_saturated}). We prove it now in the hope of helping the reader get acquainted with dynamical simplices.

\begin{lemma}\label{l:toyexample}
Let $U, V$ be clopen and such that $\mu(U)=\mu(V)$ for all $\mu \in K$. Let $\mcA$ be a clopen partition of $U$, and $\mcB$ be a clopen partition of $V$. Let also $C_1,\ldots,C_n$, $D_1,\ldots,D_m$ be nonempty clopen subsets of $U,V$ respectively. 

There exist clopen subsets $Y_1,\ldots,Y_q$ of $U$, and clopen subsets $Z_1,\ldots,Z_q$ of $V$, such that:
\begin{itemize}
\item $\forall i \ \forall \mu \in K \quad \mu(Y_i)= \mu(Z_i)$.
\item $Y_1,\ldots,Y_q$ form a partition of $U$ which refines $\mcA$, and $Z_1,\ldots,Z_q$ form a partition of $V$ which refines $\mcB$.
\item For all $i \in \{1,\ldots,n\}$ and all $j \in \{1,\ldots,m\}$ there exists $k \in \{1,\ldots q\}$ such that $Y_k \subseteq C_i$ and $Z_k \subseteq D_j$.
\end{itemize}

\end{lemma}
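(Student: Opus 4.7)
The plan is to build the $Y_k$'s and $Z_k$'s in two stages: first produce a single pair $(Y_{ij},Z_{ij})$ for each $(i,j) \in \{1,\ldots,n\}\times\{1,\ldots,m\}$ witnessing the third bullet, and then fill in the leftover portions of $U$ and $V$ to obtain the required matched partitions. To set up the first stage I would replace $\mcA$ by its common refinement $\mcA'$ with the finite Boolean algebra generated by $\{C_1,\ldots,C_n\}$, so that each atom of $\mcA'$ is either contained in or disjoint from every $C_i$; refine $\mcB$ analogously to $\mcB'$ using the $D_j$'s. Since each $C_i$ and each $D_j$ is nonempty, I can fix for each $i$ an atom $A_i^\star \in \mcA'$ with $A_i^\star \subseteq C_i$, and for each $j$ an atom $B_j^\star \in \mcB'$ with $B_j^\star \subseteq D_j$; all the $Y_{ij}, Z_{ij}$ will be carved out of these distinguished atoms.

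For each of the $nm$ pairs I want pairwise disjoint clopen sets $Y_{ij} \subseteq A_i^\star$ and $Z_{ij} \subseteq B_j^\star$ with $\mu(Y_{ij}) = \mu(Z_{ij})$ for all $\mu \in K$. By Lemma \ref{l:intersection} and compactness of $K$, the quantity $c := \min_{\mu \in K, i, j}\min(\mu(A_i^\star), \mu(B_j^\star))$ is positive, and the same lemma lets me fix $\varepsilon > 0$ so small that each $A_i^\star$ and each $B_j^\star$ can be clopenly decomposed into pieces of $\mu$-measure at most $\varepsilon$ uniformly in $\mu \in K$, with additionally $nm\varepsilon < c/2$. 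Processing the pairs iteratively, the unused portion of the current $A_i^\star$ (resp. $B_j^\star$) always has $\mu$-measure at least $c/2 > \varepsilon$ for every $\mu \in K$; so I pick a clopen $E$ inside the unused part of $A_i^\star$ with $\mu(E) \le \varepsilon$, and thanks to the strict inequality $\mu(E) < \mu(B_j^\star \cap (\textrm{unused}))$ across all $\mu \in K$ I invoke the definition of a dynamical simplex to produce a clopen $\tilde E$ inside the unused part of $B_j^\star$ with $\mu(\tilde E) = \mu(E)$ for all $\mu$. Setting $Y_{ij} := E$ and $Z_{ij} := \tilde E$ completes this step.

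For the second stage, the residuals $U' := U \setminus \bigcup_{i,j} Y_{ij}$ and $V' := V \setminus \bigcup_{i,j} Z_{ij}$ are clopen and satisfy $\mu(U') = \mu(V')$ for all $\mu \in K$, so Proposition 2.7 of \cite{Ibarlucia2016} provides $g \in G_K$ with $g(U') = V'$. The common refinement of $\mcA'|_{U'}$ with $g^{-1}(\mcB'|_{V'})$ is a clopen partition $\{E_1, \ldots, E_s\}$ of $U'$ refining $\mcA'|_{U'}$; setting $F_k := g(E_k)$ yields a partition of $V'$ refining $\mcB'|_{V'}$, with $\mu(E_k) = \mu(F_k)$ for all $\mu \in K$ because $g \in G_K$. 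Reindexing the combined families $\{Y_{ij}\} \cup \{E_k\}$ and $\{Z_{ij}\} \cup \{F_k\}$ in matching order gives the required $Y_1,\ldots,Y_q$ and $Z_1,\ldots,Z_q$. The only delicate point is the quantitative bookkeeping in the first stage, and that is cleanly handled by the uniform bounds supplied by Lemma \ref{l:intersection}.
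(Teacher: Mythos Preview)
Your proof is correct and follows essentially the same two-stage strategy as the paper: first carve out $nm$ matched pairs $(Y_{ij},Z_{ij})$ witnessing the third bullet using the uniform smallness supplied by Lemma~\ref{l:intersection} together with the defining property of a dynamical simplex, then handle the leftover pieces via some $g\in G_K$ sending $U'$ to $V'$ and taking a common refinement. The only cosmetic differences are that the paper shrinks the $C_i$'s to be disjoint and contained in atoms of $\mcA$ (rather than refining $\mcA$), and carves out all $nm$ small pieces in one shot via a diameter bound instead of iterating pair by pair.
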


\begin{proof}
Fix a compatible distance on $X$. Reducing $C_1,\ldots,C_n$ if necessary, we may assume that they are pairwise disjoint, and that each $C_i$ is contained in some element of $\mcA$; and similarly, mutatis mutandis, for $D_1,\ldots,D_m$ and $\mcB$.

Fix $\varepsilon >0$ for which any clopen subset $U$ of $X$ of diameter less than $ \varepsilon$ is such that $n \mu(U) < \mu(D_j)$ for all $j$. Inside each $C_i$, we may find $m$ nonempty, disjoint clopen subsets $(A_{i,j})_{1 \le j \le m}$ of diameter less than $\varepsilon$. For every $j \in \{1,\ldots,m\}$ and every $\mu \in K$, we then have
\[\sum_{i=1}^n \mu(A_{i,j}) < \mu(D_j) \] 

Since $K$ is a dynamical simplex, we may then find disjoint clopen subsets $B_{i,j}$ inside each $D_j$ such that $\mu(B_{i,j})=\mu(A_{i,j})$ for all $i \in \{1,\ldots,n\}$.

Using a bijection from $\{1,\ldots,n\} \times \{1,\ldots,m\}$ to $\{1,\ldots,nm\}$, we enumerate $A_{i,j}$ under the form $Y_1,\ldots,Y_{nm}$, and $B_{i,j}$ as $Z_1,\ldots,Z_{nm}$. Then $\mu(Y_k)=\mu(Z_k)$ for all $k$, and for all $i,j$ there exists $k$ such that $Y_k \subseteq C_i$ and $Z_k \subseteq D_j$. 

Let $\tilde U = U \setminus \bigsqcup_{k=1}^{nm} Y_k$, $\tilde V= V \setminus \bigsqcup_{k=1}^{nm} Z_k$. Then $\mu(\tilde U)=\mu(\tilde V)$ for all $\mu \in K$, so there exists choose $g \in G_K$ mapping $\tilde U$ to $\tilde V$. Note that $\mcA$ induces a clopen partition $\tilde A$ of $\tilde U$ (obtained by intersecting elements of $\mcA$ with $\tilde U$), and $\mcB$ similarly induces a clopen partition $\tilde \mcB$ of $\tilde V$. 
Consider the coarsest partition of $\tilde U$ refining both $\tilde \mcA$ and $g^{-1}(\tilde \mcB)$, and list its elements as $Y_{nm+1},\ldots,Y_q$. Then set $Z_k=g(Y_k)$ for all $k \in \{nm+1,\ldots,q\}$ to complete the construction.
\end{proof}


%
%
%

We need an additional property of dynamical simplices.
\begin{theorem}[{\cite[Corollary 2.6]{Melleray2018}}]
$K$ is \emph{approximately divisible}, i.e.~for any clopen $A$, any $\varepsilon >0$ and any positive integer $n$ there exists a clopen $B \subseteq A$ such that 
\[\forall \mu \in K \quad \mu(A)-\varepsilon \le n \mu(B) \le \mu(A) \] 
\end{theorem}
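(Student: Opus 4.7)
The theorem asserts that the dynamical simplex $K$ is approximately divisible: given clopen $A$, $\varepsilon > 0$, and $n \ge 1$, one must find a clopen $B \subseteq A$ with $\mu(A) - \varepsilon \le n\mu(B) \le \mu(A)$ for all $\mu \in K$. My plan is to produce $n$ pairwise disjoint clopens $B_1, \ldots, B_n \subseteq A$ satisfying $\mu(B_i) = \mu(B_j)$ for all $i,j$ and all $\mu \in K$, such that the residue $R = A \setminus \bigsqcup_i B_i$ has $\mu(R) < \varepsilon$ uniformly in $\mu \in K$; then $B = B_1$ satisfies the conclusion, since $n\mu(B) = \mu(A) - \mu(R)$.

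First I would apply Lemma~\ref{l:intersection} to partition $A$ into clopens $A_1, \ldots, A_m$ of small enough diameter that $\mu(A_i) < \varepsilon/(2n)$ for every $\mu \in K$ and every $i$. Starting from $B_i = \emptyset$, I would then grow the $B_i$ iteratively. At each stage I would pick a small clopen $C$ inside the current residue $R$ with $n\mu(C) < \mu(R)$ for all $\mu \in K$, and use $n-1$ successive applications of the dynamical simplex axiom (as formalized in the remark right after Lemma~2.3) to produce pairwise disjoint copies $C = C_1, C_2, \ldots, C_n \subseteq R$ with $\mu(C_j) = \mu(C)$ for all $\mu$; each $B_i$ is enlarged by absorbing the corresponding $C_i$. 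The iteration is designed to progressively absorb the pieces $A_1, \ldots, A_m$ into the growing $B_i$'s.

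The hard part will be guaranteeing that the residue at termination is $\varepsilon$-small \emph{uniformly} in $\mu \in K$, rather than only for the $\mu$'s that drive the stopping condition: a naive greedy procedure can leave the residue large in some measures while small in others. To bypass this obstacle, my plan is to orchestrate the iteration using Lemma~\ref{l:toyexample}, which permits simultaneous matching of refinements across clopens of equal measure. By splitting the pieces $A_j$ into finer sub-pieces and distributing them in parallel across $B_1, \ldots, B_n$, one can force the terminal residue to consist of a controlled number of sub-pieces whose $\mu$-masses are uniformly bounded by $\varepsilon/(2n)$; combined with the compactness of $K$ and the uniform control furnished by Lemma~\ref{l:intersection}, this will yield $\mu(R) < \varepsilon$ for all $\mu \in K$ simultaneously.
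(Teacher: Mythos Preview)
This statement is not proved in the present paper; it is quoted as \cite[Corollary~2.6]{Melleray2018}, and the argument there does not proceed by a direct greedy construction inside $\Clopen(X)$ but rather via the structure of $K$ as a Choquet simplex and the associated ordered group.

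Your proposal has a genuine gap exactly at the point you flag as ``the hard part.'' You correctly diagnose the obstacle: a greedy absorption procedure shrinks the residue in whatever measure drives the current step, but offers no uniform control across all of $K$. Your proposed fix via Lemma~\ref{l:toyexample} does not close this gap. That lemma takes as input two clopens $U,V$ with $\mu(U)=\mu(V)$ for every $\mu\in K$ and returns matched refinements; here there is no such pair to feed in, since you are trying to carve a single clopen $A$ into $n$ approximately equal parts and no companion clopen of equal $K$-measure is on hand. More importantly, the sentence ``one can force the terminal residue to consist of a controlled number of sub-pieces'' is precisely the content of the theorem and is nowhere justified: each $A_j$ has mass below $\varepsilon/(2n)$ in every $\mu$, but the number $m$ of pieces grows with $1/\varepsilon$, so a residue containing many of them need not be small.

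To see the obstruction concretely, suppose $K$ has two extreme points $\mu_1,\mu_2$ and some piece $A_j$ has $\mu_1(A_j)$ close to $\varepsilon/(2n)$ while $\mu_2(A_j)$ is near $0$. Absorbing copies of $A_j$ advances the construction for $\mu_1$ and does essentially nothing for $\mu_2$; uniform progress would require pieces whose masses are comparable across all of $K$, and manufacturing such pieces is tantamount to the approximate divisibility you are trying to establish. This circularity is why the proof in \cite{Melleray2018} goes through a different, more structural route.
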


The way we are going to use approximate divisibility is (using the same notations as above) by applying the defining property of a dynamical simplex to find disjoint clopen $B_i \subseteq A$ such that 

\[ \forall \mu \in K \ \forall i  \in \{1,\ldots,n\} \quad \mu(B_i)=\mu(B_1) \quad \text{and} \quad \mu\left(A \setminus \bigsqcup_{i=1}^n B_i \right) \le \varepsilon \] 

We recall the following standard definition.

\begin{defn}
For $g \in \Homeo(X)$, a \emph{Kakutani-Rokhlin partition} for $g$ is a clopen partition $\mcA=(A_{i,j})_{i \in I_\mcA, 0\le j \le n_i}$ such that $A_{i,j+1}=g(A_{i,j})$ whenever $0 \le j < n_i$.
\end{defn}

We will need an abstract version of these partitions (intuitively, we manipulate clopen partitions that could be Kakutani-Rokhlin partitions for some homeomorphism, and use those to build homeomorphisms).

\begin{defn} A \emph{$K$-partition} is a clopen partition $\mcA=(A_{i,j})_{i \in I_\mcA, 0\le j \le n_i}$
such that 
\[ \forall i \in I_\mcA \  \forall j,k \le n_i \ \forall \mu \in K \quad \mu(A_{i,j})= \mu(A_{i,k})\] 

We use the notation $F_\mcA$ to denote $\{(i,j) \colon i \in I_\mcA \ , 0 \le j \le n_i\}$.

We call a set $(A_{i,j})_{0\le j \le n_i}$ a \emph{column} of the partition; we say that $n_i+1$ is the \emph{height} of this column, $A_{i,0}$ is its \emph{base} and $A_{i,n_i}$ is its \emph{top}.
Similarly, the union of all $A_{i,0}$ is the \emph{base} of the partition, and the union of all $A_{i,n_i}$ is its \emph{top}. We sometimes call $A_{i,j+1}$ the \emph{successor} of $A_{i,j}$ in $\mcA$ (for $j< n_i$).

We say that a homeomorphism $g$ is \emph{compatible} with $\mcA$ if $g(A_{i,j})=A_{i,j+1}$ for all $i$ and all $j < n_i(\mcA)$. 
\end{defn}

Note that if $g$ is compatible with $\mcA$ then $g(\topp(\mcA))=\base(\mcA)$; also, given $\mcA$, the set of all $g \in G_K$ which are compatible with $\mcA$ is clopen. When $g$ is a homeomorphism such that $g_* \mu = \mu$ for all $\mu \in K$, any Kakutani-Rokhlin partition for $g$ is a $K$-partition with which $g$ is compatible. 

\begin{defn}
A $K$-partition $\mcB$ \emph{refines} another $K$-partition $\mcA$ if:

\begin{enumerate}
\item For all $(i,j) \in F_\mcB$ there exists $(k,l) \in F_\mcA$ such that $B_{i,j} \subseteq A_{k,l}$. \\
(i.e.~every element of $\mcA$ is a union of elements of $\mcB$) 
\item For all $i \in I_\mcB$, there exists $k \in I_\mcA$ such that $B_{i,0} \subseteq A_{k,0}$.\\
 (i.e.~the base of $\mcB$ is contained in the base of $\mcA$)
\item For all $((i,j),(k,l)) \in F_\mcB \times F_\mcA$, if $B_{i,j} \subseteq A_{k,l}$ and $l<n_k(\mcA)$, then $j<n_i(\mcB)$ and $B_{i,j+1} \subseteq A_{k,l+1}$.\\
 (i.e.~the successor of $B_{i,j}$ is contained in the successor of $A_{k,l}$)
\item For all $(i,j) \in F_\mcB$, if $B_{i,j} \subseteq A_{k,n_k(\mcA)}$ for some $k$ and $j< n_i(\mcB)$, then $B_{i,j+1}$ is contained in some $A_{p,0}$.\\
 (i.e.~if $B_{i,j}$ is contained in the top of $\mcA$ and $j< n_i$ then the successor of $B_{i,j}$ is contained in the base of $\mcA$)
\end{enumerate}
Informally, the columns of $\mcB$ have been obtained by cutting the columns of $\mcA$ vertically, and stacking these fragments of columns on top of each other.

Whenever $B_{i,j}$ is contained in $A_{k,0}$ we say that $(B_{i,j},\ldots,B_{i,j+n_i(\mcA)})$ is a \emph{copy} of the column $(A_{k,0},\ldots,A_{k,n_k(\mcA)})$ contained in $(B_{i,0},\ldots,B_{i,n_k(\mcB)})$. 
\end{defn}

\begin{defn}
Let $K$ be a dynamical simplex, and $g \in G_K$. We say that $g$ is $K$-\emph{saturated} if, whenever $A,B \in \Clopen(X)$  are such that $\mu(A)=\mu(B)$ for all $\mu \in K$, there exists a Kakutani-Rokhlin partition $(A_{i,j})_{i \in I, 0 \le j \le n_i}$ for $g$ such that $A,B$ are unions of atoms of this partition and for all $i$ 
\[\left|\{j  \colon A_{i,j} \subseteq A \} \right| = \left|\{j \colon A_{i,j} \subseteq B \} \right| \]
\end{defn}
(here and throughout the article $|F|$ denote the cardinality of a finite set $F$).

We note that, for a given $A,B \in \Clopen(X)$ such that $\mu(A)=\mu(B)$ for all $\mu \in K$, the set of all $g$ admitting a Kakutani-Rokhlin partition with the above property is clopen subset in $G_K$: if $\mcA$ is such a partition for $g$, any $h \in G_k$ which is compatible with $\mcA$ will also have the desired property. Since there are countably many clopen subsets of $X$, it follows that $K$-saturated homeomorphisms form a $G_\delta$ subset of $G_K$.

The following result explains why saturated homeomorphisms play such an important role in our arguments.

\begin{prop}\cite[Corollary~4.3]{Ibarlucia2016}
Assume that $g \in G_K$ is saturated. Then the set of $g$-invariant Borel probability measures is equal to $K$.
\end{prop}

Combined with the above fact, the following proposition is the heart of the proof of the main result of \cite{Ibarlucia2016}.

\begin{prop}\label{prop:fundamental}
Let $\mcA$ be a $K$-partition; there exists a $K$-saturated $g \in G_K$ which is compatible with $\mcA$.
\end{prop}

This proposition is not formally stated in \cite{Ibarlucia2016} (though it is implicit in Remark $2$ of that paper) but is established as in the proof of \cite[Proposition~3.6]{Ibarlucia2016} by an inductive argument based on the following lemma.

\begin{lemma}[{combination of \cite[Propositions~2.5 and~3.5]{Ibarlucia2016}}]\label{lemma:balanced}
Let $\mcA$ be a $K$-partition. Assume that $U,V$ are clopen and $\mu(U)=\mu(V)$ for all $\mu \in K$. Let $W$ be a nonempty clopen subset of $\topp(\mcA)$.

Then there exists a $K$-partition $\mcB$ refining $\mcA$, whose top is contained in $W$, such that $U,V$ are both unions of atoms of $\mcB$ and for all $k$ one has
\[\left|\{l \in \{0,\ldots,n_k(\mcB) \} \colon B_{k,l} \subseteq U \} \right| = \left|\{l \in \{0,\ldots,n_k(\mcB) \} \colon B_{k,l} \subseteq V \} \right| \]

When $U,V$ satisfy the conditions above, we say that $(U,V)$ are $\mcB$-equivalent.
\end{lemma}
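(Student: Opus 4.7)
The plan is to construct $\mcB$ in two phases. In the first, I refine $\mcA$ into a $K$-partition $\mcA^0$, formally refining $\mcA$, whose atoms are each either contained in or disjoint from $U$, and similarly for $V$. In the second, I cut the columns of $\mcA^0$ into thin sub-columns using approximate divisibility and restack them into super-columns so as to produce a $K$-partition $\mcB$ with top in $W$ and with $|\{l:B_{k,l}\subseteq U\}|=|\{l:B_{k,l}\subseteq V\}|$ in every column.

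For the first phase, I build a homeomorphism $g_0\in G_K$ compatible with $\mcA$ by gluing $G_K$-elements produced by \cite[Proposition~2.7]{Ibarlucia2016}: one inside each column mapping $A_{k,j}$ onto $A_{k,j+1}$ for $j<n_k$, and one more mapping $\topp(\mcA)$ onto $\base(\mcA)$---their $K$-measures coincide because $\mcA$ is a $K$-partition. The resulting piecewise-defined $g_0$ lies in $G_K$ since it is glued from $G_K$-elements on a clopen partition. Then for each column $C_k$ of $\mcA$, I partition $A_{k,0}$ according to the orbit-type map $x\mapsto(\mathbf{1}_U(g_0^j x),\mathbf{1}_V(g_0^j x))_{0\le j\le n_k}$ and push each piece forward to level $A_{k,j}$ by $g_0^j$. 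Since $g_0\in G_K$, the resulting refinement $\mcA^0$ is a $K$-partition that formally refines $\mcA$, and each atom of $\mcA^0$ lies entirely inside or entirely outside each of $U$ and $V$.

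For the second phase, denote the columns of $\mcA^0$ by $D_1,\dots,D_r$, write $T_k$ for the top atom of $D_k$, and let $u_k,v_k$ count the atoms of $D_k$ lying in $U$ and $V$. The hypothesis $\mu(U)=\mu(V)$ forces $\sum_k(u_k-v_k)\mu(D_{k,0})=0$ for every $\mu\in K$, and this identity is the arithmetic constraint that makes the rebalancing possible. Using approximate divisibility I slice each $D_{k,0}$ into many pieces of approximately equal small measure, each piece serving as the base of a thin sub-column of $D_k$ that inherits the counts $(u_k,v_k)$. Whenever $W\cap T_k\neq\emptyset$, I reserve some of these sub-columns to have their top inside $W\cap T_k$, obtained by pulling $W\cap T_k$ back to a clopen subset of $D_{k,0}$ using the dynamical simplex property and \cite[Proposition~2.7]{Ibarlucia2016}. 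Finally, I chain the sub-columns into super-columns of $\mcB$---concatenating a sub-column's top to the next sub-column's base through a measure-matching $G_K$-element, in the spirit of Lemma~\ref{l:toyexample}---arranged so that each super-column terminates in a $W$-ending sub-column and its total $U$- and $V$-counts agree.

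I expect the main obstacle to be the combinatorial bookkeeping behind this last stacking. Approximate divisibility and \cite[Proposition~2.7]{Ibarlucia2016} grant ample flexibility for matching clopens by measure, but they do not directly dictate how to pair sub-columns so as to balance $U$ and $V$ in every super-column simultaneously for all $\mu\in K$. When $K$ is a singleton a real-valued pairing suffices; in general the functions $\mu\mapsto\mu(D_{k,0})$ may be affinely independent over $K$, forcing an integer-valued pairing. The way around this is to slice so finely that, after further $G_K$-matchings, the sub-columns come in classes whose bases share a common $K$-measure, reducing the balancing to a combinatorial partition of an integer multiset whose signed sum is zero; the small measure defect produced by approximate divisibility is absorbed into a final short remainder super-column built using Lemma~\ref{l:toyexample}.
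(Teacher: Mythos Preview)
The paper does not prove this lemma; it merely records it as a combination of Propositions~2.5 and~3.5 of \cite{Ibarlucia2016}. So there is no in-paper proof to compare against, and your proposal is really being measured against the argument in \cite{Ibarlucia2016}.

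Your Phase~1 is fine and is exactly the standard first step: build some $g_0\in G_K$ compatible with $\mcA$ and refine each column base according to the orbit-type of $(U,V)$, so that every atom of the resulting $K$-partition $\mcA^0$ is contained in or disjoint from each of $U$ and $V$.

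The genuine gap is in Phase~2, and you essentially flag it yourself. The whole content of the lemma is the balancing $\sum_k (u_k-v_k)\,\mu(D_{k,0})=0 \Rightarrow$ column-by-column equality after refinement, and your sketch does not establish it. The specific mechanism you propose---slice via approximate divisibility and then, ``after further $G_K$-matchings,'' arrange that the sub-column bases fall into classes sharing a common $K$-measure, reducing to an integer multiset problem---does not work in general. Approximate divisibility cuts $D_{k,0}$ into pieces whose $K$-measure profile is still proportional to $\mu\mapsto\mu(D_{k,0})$; for distinct $k$ these profiles are typically affinely independent on $K$, so no amount of slicing produces sub-columns from different original columns whose bases share a common $K$-measure. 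Likewise, the ``small defect'' left over by approximate divisibility is only small for the particular column being sliced; dumping all the remainders into a single extra super-column gives that column some pair $(u,v)$ of counts with no reason to satisfy $u=v$. In short, the step where the functional identity $\sum_k (u_k-v_k)\,\mu(D_{k,0})=0$ is converted into a per-column integer identity is precisely what needs an argument, and your proposal does not supply one.

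The argument in \cite{Ibarlucia2016} does not go through a ``common unit'' reduction. Rather, it exploits the dynamical-simplex property directly on the pair $(U,V)$: since $\mu(U)=\mu(V)$ for all $\mu\in K$, one has an element of $G_K$ carrying $U$ onto $V$, and the refinement is organized so that this map pairs atoms of $U$ with atoms of $V$ within each column of the refined partition. Shrinking the top into $W$ is handled separately by the ``small base'' refinement (the content of Proposition~2.5 there), which is why the lemma is stated as a combination of the two propositions. If you want to repair your write-up, replace the approximate-divisibility bookkeeping by this direct $G_K$-matching of $U$ with $V$ and then verify that the induced pairing can be made compatible with a $K$-partition refinement whose top lies in $W$.
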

\section{Proof that a generic element of $G_{K}^*$ is $K$-saturated}\label{s:proof_main}
Throughout this section we fix a dynamical simplex $K$. We recall that $G_K$ is the group of all homeomorphisms which preserve $K$, and $G_K^*$ is the set of all $g \in G_K$ such that, for any nontrivial clopen set $U$, one has $g(U) \ne U$. Our aim is to prove the following theorem. 

\begin{theorem} \label{t:generic_saturated}
Let $K$ be a dynamical simplex. Then a generic element of $G_K^*$ is $K$-saturated.
\end{theorem}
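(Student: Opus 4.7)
As already noted in the excerpt, $K$-saturated homeomorphisms form a $G_\delta$ subset of $G_K^*$, so by the Baire category theorem it suffices to establish density in $G_K^*$. The task therefore reduces to the following: given $g\in G_K^*$ and a clopen partition $\mcA$, produce a $K$-saturated $h$ with $h(A)=g(A)$ for every $A\in\mcA$. My plan is to imitate the inductive construction underlying Proposition~\ref{prop:fundamental}, but starting from an initial $K$-partition $\mathcal{P}_0$ whose column structure and top-to-base bijection are chosen to encode the constraint $h(A)=g(A)$ on $\mcA$; all subsequent refinements will preserve this encoding while driving $h$ to $K$-saturation.

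To design $\mathcal{P}_0$, I would first refine $\mcA$ to $\mcA\vee g^{-1}(\mcA)$ and introduce the directed graph $\Gamma$ on $\mcA$ with an edge $A\to A'$ whenever $A\cap g^{-1}(A')\ne\emptyset$. The hypothesis $g\in G_K^*$ forces $\Gamma$ to be strongly connected: a proper subset $S\subsetneq\mcA$ closed under successors would, by measure-preservation and fullness of measures in $K$, produce a nontrivial clopen $g$-invariant set. Columns of $\mathcal{P}_0$ will then be assembled to correspond to finite paths in $\Gamma$, the $j$-th atom of a column being a clopen subset of the atom of $\mcA\vee g^{-1}(\mcA)$ prescribed by the path; this automatically guarantees that the $\mathcal{P}_0$-successor of any non-top atom $P\subseteq A$ lies in $g(A)$. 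Using approximate divisibility together with Lemma~\ref{l:toyexample}, I would cut the atoms of $\mcA\vee g^{-1}(\mcA)$ into small clopen pieces of common $K$-measure and assemble them into path-labeled columns in such a way that, for every $A\in\mcA$ and every $\mu\in K$,
\[\mu\bigl(\topp(\mathcal{P}_0)\cap A\bigr)=\mu\bigl(\base(\mathcal{P}_0)\cap g(A)\bigr),\]
so that tops in $A$ can be paired with bases in $g(A)$ by a measure-preserving bijection. I then fix a specific homeomorphism $h_0$ compatible with $\mathcal{P}_0$ realizing such a pairing; by construction $h_0(A)=g(A)$ for every $A\in\mcA$.

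The remaining work is to push $h_0$ towards $K$-saturation while keeping it in the neighborhood of $g$. I would proceed as in the proof of Proposition~\ref{prop:fundamental}: enumerate countably many pairs $(U_n,V_n)$ of clopen sets with $\mu(U_n)=\mu(V_n)$ on all of $K$, and inductively refine $\mathcal{P}_n$ into $\mathcal{P}_{n+1}$ via Lemma~\ref{lemma:balanced} so that $(U_n,V_n)$ become $\mathcal{P}_{n+1}$-equivalent, at each step re-pairing tops in $A$ with bases in $g(A)$ to obtain a new $h_{n+1}$ compatible with $\mathcal{P}_{n+1}$. Each refinement modifies $h_n$ only on the top of $\mathcal{P}_n$, which Lemma~\ref{lemma:balanced} allows to be arbitrarily small; the sequence $(h_n)$ therefore converges in $G_K$ to a homeomorphism $h$ which still satisfies $h(A)=g(A)$ and is $K$-saturated by the standard enumeration argument.

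The main obstacle I expect is the construction of $\mathcal{P}_0$ with the required top-to-base balance: matching column widths in each $A\in\mcA$ with column widths in $g(A)$ while keeping columns path-like in $\Gamma$ and keeping $\mathcal{P}_0$ a $K$-partition. Strong connectivity of $\Gamma$ and approximate divisibility are the key tools, but adjusting the multisets of column widths atom-by-atom to achieve the balance is where the technical work lies. Once $\mathcal{P}_0$ and $h_0$ are in place, the inductive step of Proposition~\ref{prop:fundamental} carries over essentially verbatim and produces the desired $K$-saturated element of the neighborhood.
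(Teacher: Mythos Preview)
Your initial set-up matches the paper's closely: you also introduce the directed graph on $\mcA$, observe its strong connectivity from $g\in G_K^*$, and aim to build a $K$-partition $\mathcal{P}_0$ whose column structure ``respects $g$'' (the paper's terminology) so that any compatible homeomorphism agrees with $g$ on $\mcA$. Where your argument diverges---and breaks down---is in the inductive step.

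You write that once $\mathcal{P}_0$ and $h_0$ are in place, ``the inductive step of Proposition~\ref{prop:fundamental} carries over essentially verbatim'': refine via Lemma~\ref{lemma:balanced} and re-pair tops with bases. But being compatible with $\mathcal{P}_{n+1}$ determines $h_{n+1}$ completely on everything \emph{except} $\topp(\mathcal{P}_{n+1})$; for $h_{n+1}(A)=g(A)$ to hold you need the \emph{internal} column structure of $\mathcal{P}_{n+1}$ to respect $g$, not just the top-to-base map. A refinement in the sense of $K$-partitions stacks old columns on top of one another, and Lemma~\ref{lemma:balanced} gives you no control over which column goes on top of which; it may well place the base of a column sitting in $g(A')$ above the top of a column sitting in $A$ with $A\ne A'$. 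So the refinement produced by Lemma~\ref{lemma:balanced} need not respect $g$, and then no choice of top-to-base pairing will rescue $h_{n+1}(A)=g(A)$. The balance equation you emphasise is a \emph{consequence} of respecting $g$ (this is exactly the paper's Lemma~\ref{l:compatiblepartition}\eqref{eq:compatible2}), not a substitute for it.

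This is precisely the obstacle the paper's argument is designed to remove. Rather than trying to maintain the $g$-constraint through every refinement, the paper first performs a finite sequence of carefully controlled cut-and-stack moves (each preserving admissibility, and each using approximate divisibility and Lemma~\ref{l:toyexample}) that drives the number $N(\mcB)$ of atoms of $\mcA$ meeting $\topp(\mcB)$ down to~$1$. Once $N(\mcB)=1$, \emph{any} $K$-saturated homeomorphism compatible with $\mcB$ automatically agrees with $g$ on $\mcA$ (Lemma~\ref{l:parcequecestnotreprojet}), so Proposition~\ref{prop:fundamental} can be invoked as a black box with no further constraint to maintain. The hard work---reducing $N(\mcB)$ while staying admissible---is exactly the lemma you thought you could skip; you have located the difficulty in the wrong place.
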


As pointed out above, $K$-saturated homeomorphisms form a $G_\delta$ subset of $G_K^*$ (and we know that there exist $K$-saturated homeomorphisms by the main result of \cite{Ibarlucia2016}, so $G_K^*$ is also nonempty). So our aim really is to prove that the set of $K$-saturated homeomorphisms is dense in $G_{K}^*$; the remainder of this section is devoted to the proof of that fact. 


%

We fix $\varphi \in G_K^*$, and a clopen partition $\mcA$ of $X$. Our goal is to prove that there exists a $K$-saturated element $\psi \in G_K$ such that $\psi(\alpha)=\varphi(\alpha)$ for all $\alpha \in \mcA$.

The proof is going to proceed by building certain $K$-partitions, and we think of $\mcA$ as a $K$-partition whose columns all have height equal to $1$.

\begin{defn}
Let $\mcB$ be a $K$-partition which refines $\mcA$. For every atom $B_{i,j}$ of $\mcB$ there exists an atom $\alpha(i,j) $ of $\mcA$ such that $B_{i,j} \subseteq \alpha(i,j)$; we say that $\mcB$ \emph{respects $\varphi$} if $B_{i,j+1} \subseteq \varphi(\alpha(i,j))$ whenever $j< n_i$.
\end{defn}

\begin{lemma}\label{l:compatiblepartition}
Assume that $\mcB$ refines $\mcA$ and respects $\varphi$, and let $g \in G_K$ be compatible with $\mcB$. Then
\begin{enumerate}
\item\label{eq:compatible1} For every $\alpha \in \mcA$ we have $g(\alpha \setminus \topp(B))= \varphi(\alpha) \setminus \base(\mcB)$. 
\item\label{eq:compatible2} For every $\alpha \in \mcA$, and every $\mu \in K$, $\mu(\alpha \cap \topp(B))= \mu( \varphi(\alpha) \cap \base(\mcB))$.
\item\label{eq:compatible3} For every $\alpha \in \mcA$ which does not intersect $\topp(\mcB)$ we have $g(\alpha)=\varphi(\alpha)$.
\item\label{eq:compatible4} $ \base(\mcB) \subseteq \bigcup \{\varphi(\alpha) \colon \alpha \in \mcA \text{ and } \alpha \cap \topp(\mcB) \ne \emptyset\}$.
\end{enumerate}
\end{lemma}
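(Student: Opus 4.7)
The plan is to treat item \eqref{eq:compatible1} as the main claim and deduce the rest from it almost formally, using only the fact that $g$ and $\varphi$ both preserve every $\mu\in K$ together with the first part of Lemma~\ref{l:intersection} (that every nonempty clopen set has strictly positive $\mu$-measure for every $\mu\in K$). Observe first that because $\varphi$ is a homeomorphism, $\{\varphi(\alpha)\colon \alpha\in\mcA\}$ is a clopen partition of $X$, so every atom $B_{i,k}$ of $\mcB$ lies in exactly one $\varphi(\alpha)$; this uniqueness is what glues the argument together.

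For \eqref{eq:compatible1} I would argue by double inclusion, atom by atom. Take an atom $B_{i,j}\subseteq\alpha$ with $B_{i,j}\not\subseteq\topp(\mcB)$, i.e.\ $j<n_i(\mcB)$. Compatibility of $g$ with $\mcB$ gives $g(B_{i,j})=B_{i,j+1}$, and the respect-for-$\varphi$ hypothesis gives $B_{i,j+1}\subseteq\varphi(\alpha(i,j))=\varphi(\alpha)$; moreover $B_{i,j+1}$ cannot lie in $\base(\mcB)$ since $j+1\geq 1$. Thus $g(\alpha\setminus\topp(\mcB))\subseteq\varphi(\alpha)\setminus\base(\mcB)$. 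Conversely, if $B_{i,k}\subseteq\varphi(\alpha)$ with $k\geq 1$, then $g(B_{i,k-1})=B_{i,k}$ and by the respect condition $B_{i,k}\subseteq\varphi(\alpha(i,k-1))$; the uniqueness noted above forces $\alpha(i,k-1)=\alpha$, so $B_{i,k-1}\subseteq\alpha\setminus\topp(\mcB)$ and $B_{i,k}$ is in the image.

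For \eqref{eq:compatible2}, since both $g$ and $\varphi$ belong to $G_K$, we have $\mu(\alpha)=\mu(\varphi(\alpha))$ and, by \eqref{eq:compatible1}, $\mu(\alpha\setminus\topp(\mcB))=\mu(\varphi(\alpha)\setminus\base(\mcB))$; subtracting yields the equality. For \eqref{eq:compatible3}, if $\alpha\cap\topp(\mcB)=\emptyset$, then \eqref{eq:compatible2} gives $\mu(\varphi(\alpha)\cap\base(\mcB))=0$ for every $\mu\in K$, and Lemma~\ref{l:intersection} forces the clopen set $\varphi(\alpha)\cap\base(\mcB)$ to be empty; plugging this into \eqref{eq:compatible1} gives $g(\alpha)=\varphi(\alpha)$. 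Finally, \eqref{eq:compatible4} is the contrapositive of the measure argument used for \eqref{eq:compatible3}: if $\alpha\cap\topp(\mcB)=\emptyset$ then $\varphi(\alpha)\cap\base(\mcB)=\emptyset$, so any point of $\base(\mcB)$ must lie in some $\varphi(\alpha)$ with $\alpha\cap\topp(\mcB)\neq\emptyset$.

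The only delicate point is really \eqref{eq:compatible1}; I expect the main obstacle to be avoiding confusion between atoms of $\mcA$, of $\mcB$, and of $\varphi(\mcA)$, and making sure the respect-condition is correctly applied at the \emph{predecessor} $B_{i,k-1}$ when running the reverse inclusion. Everything else is then essentially a one-line consequence, using only the $K$-invariance of $g$ and $\varphi$ and the fact that clopen sets of $K$-measure zero are empty.
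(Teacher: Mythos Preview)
Your proof is correct and follows essentially the same route as the paper's: establish \eqref{eq:compatible1} by a double inclusion using the respect-for-$\varphi$ condition, then derive \eqref{eq:compatible2}--\eqref{eq:compatible4} from it together with measure preservation and fullness of the measures in $K$. One small imprecision: your opening observation that ``every atom $B_{i,k}$ of $\mcB$ lies in exactly one $\varphi(\alpha)$'' is not justified by the partition property of $\{\varphi(\alpha)\}$ alone (a base atom $B_{i,0}$ could a priori straddle several $\varphi(\alpha)$'s); it only holds for $k\ge 1$, and there it follows from the respect condition itself, which is exactly how you use it in the reverse inclusion---so the argument is unaffected.
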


\begin{proof}
We begin by proving \eqref{eq:compatible1}. Let $\alpha$ be an atom of $\mcA$. 
Since $\mcB$ refines $\mcA$, respects $\varphi$ and $g$ is compatible with $\mcB$, we have $g(\alpha \setminus \topp(\mcB)) \subseteq \varphi(\alpha)$; further, $g(\alpha \setminus \topp(\mcB))$ is disjoint from $\base(\mcB)$. This establishes one inclusion.  Also, every atom of $\mcB$ which is not contained in $\base(\mcB)$ is contained in $\varphi(\beta)$ for some atom $\beta$ of $\mcA$. Thus, $\varphi(\alpha) \setminus \base(\mcB)$ is a union of atoms of $\mcB$; these atoms may be listed as $(B_{i,j})_{(i,j )\in F}$, and for all $(i,j) \in F$ we have $j \ge 1$. Using again the fact that $\mcB$ respects $\varphi$, each $B_{i,j-1}$ is contained in $\alpha \setminus \topp(\mcB)$, and $B_{i,j}=g(B_{i,j-1})$, so $\varphi(\alpha) \setminus \base(\mcB)$ is contained in $g(\alpha \setminus \topp(\mcB))$.

Now that \eqref{eq:compatible1} is proved, fix $\mu \in K$ and $\alpha$ an atom of $\mcA$. We have both 
\begin{eqnarray*}
&\mu(\alpha)= \mu(\alpha \setminus \topp(B)) + \mu(\alpha \cap \topp(\mcB)) \quad \text{and} \\
&\mu(\varphi(\alpha))= \mu(\varphi(\alpha)\setminus \base(\mcB))+ \mu(\varphi(\alpha) \cap \base(\mcB))
\end{eqnarray*}
Since $g$ and $\varphi$ preserve $\mu$ , \eqref{eq:compatible1} implies that $\mu(\alpha \cap \topp(\mcB))= \mu(\varphi(\alpha) \cap \base(\mcB))$.

The third condition follows immediately: if $\alpha \cap \topp(\mcB)= \emptyset$, then $\varphi(\alpha) \cap \base(\mcB)$ must also be empty by the second condition, and then \eqref{eq:compatible1} gives $g(\alpha)=\varphi(\alpha)$.

Since $\varphi(\alpha)$ does not intersect $\base(\mcB)$ if $\alpha$ does not intersect $\topp(\mcB)$, we also obtain that $\base(\mcB)$ is contained in the union of all $\varphi(\alpha)$ such that $\alpha \cap \topp(\mcB) \ne \emptyset$, and that proves \eqref{eq:compatible4}.
%
\end{proof}

\begin{defn}
Let $\mcB$ be a $K$-partition which refines $\mcA$ and respects $\varphi$. We denote by $N(\mcB)$ the number of atoms of $\mcA$ which intersect $\topp(\mcB)$.
\end{defn}

Our goal is to prove that there exists $\mcB$ which refines $\mcA$, respects $\varphi$ and such that $N(\mcB)=1$. Indeed, we note the following fact.

\begin{lemma}\label{l:parcequecestnotreprojet}
Assume that there exists a $K$-partition $\mcB$ which refines $\mcA$, respects $\varphi$, and is such that $N(\mcB)=1$. Then there exists a $K$-saturated $\psi \in G_K$ such that $\psi(\alpha)=\varphi(\alpha)$ for all $\alpha \in \mcA$.
\end{lemma}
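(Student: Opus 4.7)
The plan is to apply Proposition \ref{prop:fundamental} to the $K$-partition $\mcB$ itself, obtaining a $K$-saturated $\psi \in G_K$ which is compatible with $\mcB$. The whole task is then to check that this $\psi$ satisfies $\psi(\alpha)=\varphi(\alpha)$ for every $\alpha \in \mcA$.

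First I would invoke item \eqref{eq:compatible3} of Lemma \ref{l:compatiblepartition}: since $\mcB$ refines $\mcA$, respects $\varphi$, and $\psi$ is compatible with $\mcB$, for every atom $\alpha \in \mcA$ with $\alpha \cap \topp(\mcB) = \emptyset$ we automatically get $\psi(\alpha) = \varphi(\alpha)$. This handles all but the atoms of $\mcA$ that intersect $\topp(\mcB)$.

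Here is where the hypothesis $N(\mcB)=1$ enters: by definition, there is a unique atom $\alpha_0 \in \mcA$ with $\alpha_0 \cap \topp(\mcB) \neq \emptyset$, and for every other $\alpha \in \mcA$ we have already obtained $\psi(\alpha)=\varphi(\alpha)$. Now I would use that both $\psi$ and $\varphi$ are homeomorphisms of $X$ and $\mcA$ is a partition of $X$: the images $\{\psi(\alpha) : \alpha \in \mcA\}$ and $\{\varphi(\alpha) : \alpha \in \mcA\}$ are each partitions of $X$ into $|\mcA|$ clopen pieces, agreeing on every index $\alpha \neq \alpha_0$. Taking complements in $X$ forces $\psi(\alpha_0) = X \setminus \bigsqcup_{\alpha \neq \alpha_0} \psi(\alpha) = X \setminus \bigsqcup_{\alpha \neq \alpha_0} \varphi(\alpha) = \varphi(\alpha_0)$, which completes the identification on all of $\mcA$.

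There is no real obstacle here once the setup is in place; the entire difficulty of the argument has been pushed into the construction of a $K$-partition $\mcB$ satisfying $N(\mcB)=1$ (and into Proposition \ref{prop:fundamental}, which is cited). The role of this lemma is simply to record that producing such a $\mcB$ is sufficient to solve the approximation problem for $\varphi$ on the partition $\mcA$, which is exactly what is needed to show that $K$-saturated elements are dense in $G_K^*$.
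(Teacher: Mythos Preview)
Your proof is correct and follows essentially the same route as the paper: apply Proposition~\ref{prop:fundamental} to get a $K$-saturated $\psi$ compatible with $\mcB$, use Lemma~\ref{l:compatiblepartition}\eqref{eq:compatible3} for the atoms of $\mcA$ disjoint from $\topp(\mcB)$, and then handle the unique remaining atom by the bijection/complement argument. The paper's proof is identical in structure, only phrased slightly more tersely.
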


\begin{proof}
Let $\mcB$ be such a $K$-partition. By Proposition \ref{prop:fundamental} there exists a saturated $\psi$ which is compatible with $\mcB$. 

For every $\alpha \in \mcA$ which does not intersect $\topp(\mcB)$, we have $\varphi(\alpha) = \psi(\alpha)$ by Lemma \ref{l:compatiblepartition}\eqref{eq:compatible3}. Since $\varphi$ and $\psi$ are bijections, and there exists a unique atom $\beta$ of $\mcA$ which intersects $\topp(\mcB)$, this atom must also satisfy $\varphi(\beta)=\psi(\beta)$, so we obtain as promised that $\varphi(\alpha)=\psi(\alpha)$ for every $\alpha \in \mcA$.
\end{proof}

Assume that $\mcB$ refines $\mcA$ and respects $\varphi$. Let $C$ be a column of $\mcB$; $\topp(C)$ is contained in some atom of $\mcA$, which we denote by $\tau(C)$. 

We endow the set of columns $\Coll(\mcB)$ with a directed graph structure $\Gamma_\mcB$ (possibly, with loops), by declaring that 
\[ \left( (C,D) \in \Gamma_\mcB \right) \Leftrightarrow \left( \varphi(\tau(C)) \cap \base(D) \ne \emptyset \right)\]

Note that above we are manipulating $\varphi(\tau(C))$ and not $\varphi(C)$; that is because we are looking for a $K$-saturated element which coincides with $\varphi$ on $\mcA$, so the conditions we have to satisfy are imposed by the behaviour of $\varphi$ on atoms of $\mcA$, not on smaller clopen sets (all the information we can use is what $\varphi$ does to atoms of $\mcA$, and the fact that this behavior on $\mcA$ is compatible with belonging to $G_K^*$).

\begin{defn}
Let $\mcB$ be a $K$-partition. We say that $\mcB$ is \emph{admissible} if:
\begin{itemize}
\item[•] $\mcB$ refines $\mcA$ and respects $\varphi$.
\item For every atom $\alpha$ of $\mcA$, $\varphi(\alpha)$ is a union of atoms of $\mcB$.
\item[•] For any $C,D \in \Coll(\mcB)$ there exists a path in $\Gamma_\mcB$ starting at $C$ and ending at $D$. 
\end{itemize}
\end{defn}

The second condition above mildy simplifies the argument (it helps minimize the number of cuts we have to do during the construction); the third condition is key to our construction.

\begin{lemma}
There exists an admissible $K$-partition.
\end{lemma}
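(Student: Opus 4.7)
The plan is to show that the common refinement $\mcB$ of $\mcA$ and $\varphi(\mcA)$ --- whose atoms are the nonempty sets of the form $\alpha \cap \varphi(\beta)$ for $\alpha, \beta \in \mcA$, viewed as a $K$-partition in which every atom is its own column of height one (so $n_i(\mcB) = 0$ for every $i$) --- is already admissible. The whole proof then amounts to unpacking definitions, together with a single appeal to the hypothesis $\varphi \in G_K^*$.

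First I would dispose of the easy clauses. Because every column of $\mcB$ has height one, the $K$-partition condition and the ``respects $\varphi$'' condition both hold vacuously: they only constrain pairs of distinct atoms along a single column. The four conditions in the definition of ``$\mcB$ refines $\mcA$,'' with $\mcA$ itself viewed as a height-one $K$-partition, collapse to the set-theoretic statement that each atom of $\mcA$ is a union of atoms of $\mcB$, which is built into the construction. Finally, since $\mcB$ also refines $\varphi(\mcA)$, each $\varphi(\alpha)$ is automatically a union of atoms of $\mcB$.

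The substantive content lies in showing that $\Gamma_\mcB$ is strongly connected. Each atom (equivalently, column) $C$ of $\mcB$ carries two natural labels in $\mcA$: $\alpha(C)$, the atom of $\mcA$ containing $C$, and $\alpha'(C)$, the atom of $\mcA$ such that $C \subseteq \varphi(\alpha'(C))$. A direct computation using injectivity of $\varphi$ and disjointness of the $\mcA$-atoms shows that $(C,D) \in \Gamma_\mcB$ if and only if $\alpha(C) = \alpha'(D)$. A directed path $C = C_0, \ldots, C_n = D$ in $\Gamma_\mcB$ therefore corresponds to a walk $\alpha(C) = \alpha_0, \alpha_1, \ldots, \alpha_n = \alpha(D)$ in the transition graph $G$ on $\mcA$ (with edges $(\alpha,\beta)$ whenever $\varphi(\alpha) \cap \beta \ne \emptyset$), subject to the extra constraint $\alpha_{n-1} = \alpha'(D)$; since $(\alpha'(D), \alpha(D))$ is always an edge of $G$, existence of such a walk reduces to strong connectedness of $G$.

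The strong connectedness of $G$ is the one nontrivial step and the part I expect to need the most care; it is where $\varphi \in G_K^*$ is used. I would argue by contradiction: if $G$ were not strongly connected, pick a sink strongly connected component and let $U$ be the union of its vertices, so that $U$ is a proper nonempty clopen subset of $X$ with $\varphi(U) \subseteq U$. Fix any $\mu \in K$; by hypothesis $\varphi$ preserves $\mu$ and $\mu$ has full support, so the equality $\mu(U) = \mu(\varphi(U))$ forces the clopen set $U \setminus \varphi(U)$ to have $\mu$-measure zero, and fullness of $\mu$ then makes it empty. Hence $\varphi(U) = U$ with $\emptyset \ne U \ne X$, contradicting the definition of $G_K^*$ and completing the proof.
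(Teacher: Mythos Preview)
Your proof is correct and follows essentially the same approach as the paper: both take the common refinement of $\mcA$ and $\varphi(\mcA)$, view it as a height-one $K$-partition, and use $\varphi \in G_K^*$ to obtain strong connectedness via a $\varphi$-invariant clopen set. The only cosmetic differences are that you pass through the auxiliary graph $G$ on $\mcA$ and use a sink strongly connected component, whereas the paper argues directly on $\Gamma_{\mcA'}$ via the forward-reachable set of a fixed atom; you also spell out the measure-plus-fullness step $\varphi(U)\subseteq U \Rightarrow \varphi(U)=U$, which the paper leaves implicit.
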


\begin{proof}
Let $\mcA'$ be the coarsest partition which refines both $\mcA$ and $\{\varphi(\alpha) \colon \alpha \in \mcA\}$. We view $\mcA'$ as a $K$-partition where each column has height $1$. It satisfies the first two conditions of the definition of an admissible $K$-partition.

Fix an atom $\beta$ of $\mcA'$. Let $U$ be the union of atoms of $\mcA'$ which one can reach from $\beta$ by following a path in $\Gamma_{\mcA'}$. Then $U$ is clopen and $\varphi(U) \subseteq U$, whence $U=X$ since $\varphi \in G_K^*$. This proves that $\mcA'$ is admissible.
\end{proof}

Given Lemma \ref{l:parcequecestnotreprojet}, our proof will  be complete as soon as we manage to establish the validity of the following lemma; the argument is based on repeated applications of the cutting and stacking procedure.

\begin{lemma}
Assume that $\mcB$ is an admissible $K$-partition, and $N(\mcB)>1$. Then there exists an admissible $K$-partition $C$ such that $N(\mcC)<N(\mcB)$.
\end{lemma}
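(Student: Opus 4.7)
I would pick an atom $\alpha \in \mcA$ with $\alpha \cap \topp(\mcB) \ne \emptyset$ and construct $\mcC$ from $\mcB$ by a cut-and-stack operation designed to eliminate $\alpha$ from the top. Set $S = \{C \in \Coll(\mcB) : \tau(C) = \alpha\}$ and $T = \{D \in \Coll(\mcB) : \base(D) \subseteq \varphi(\alpha)\}$; the $K$-partition property together with $\varphi_*\mu = \mu$ gives
\[
\sum_{C \in S} \mu(\base(C)) = \mu(\alpha \cap \topp(\mcB)) = \mu(\varphi(\alpha) \cap \base(\mcB)) = \sum_{D \in T} \mu(\base(D))
\]
for every $\mu \in K$. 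The first step is to use strong connectivity of $\Gamma_\mcB$ and $N(\mcB) > 1$ to show $T \setminus S \ne \emptyset$ and $S \setminus T \ne \emptyset$: if $T \subseteq S$, then for any $P \in S$ and $Q$ with $(P, Q) \in \Gamma_\mcB$ one has $\base(Q) \subseteq \varphi(\alpha)$, hence $Q \in T \subseteq S$, making $S$ forward closed in $\Gamma_\mcB$ and contradicting strong connectivity since $S \subsetneq \Coll(\mcB)$. A dual argument yields $S \setminus T \ne \emptyset$.

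\textbf{Cut-and-stack.} I would apply Lemma \ref{l:toyexample} to $U = \bigsqcup_{C \in S} \base(C)$ and $V = \bigsqcup_{D \in T} \base(D)$, with partitions by column-bases, obtaining refinements $(Y_i)_{i=1}^q$ of $U$ and $(Z_i)_{i=1}^q$ of $V$ with $\mu(Y_i) = \mu(Z_i)$ for all $\mu \in K$; each $Y_i$ sits in some $\base(C_{s(i)})$ with $C_{s(i)} \in S$, and each $Z_i$ in some $\base(D_{t(i)})$ with $D_{t(i)} \in T$. Picking any $g \in G_K$ compatible with $\mcB$ and propagating the base cuts level-by-level through each column via $g$ refines $\mcB$ into sub-columns. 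The partition $\mcC$ is then formed by stacking, for each $i$, the sub-column of $C_{s(i)}$ with base $Y_i$ immediately below the sub-column of $D_{t(i)}$ with base $Z_i$; the columns of $\mcB$ outside $S \cup T$ are kept unchanged. Routine verifications show that $\mcC$ refines $\mcA$, respects $\varphi$ (each stacking junction sends a piece of $\alpha$ to a piece of $\varphi(\alpha)$), keeps every $\varphi(\alpha')$ a union of atoms of $\mcC$ (since only bases were cut), and has $\Gamma_\mcC$ strongly connected (inherited from $\Gamma_\mcB$).

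\textbf{Main obstacle and reduction of $N$.} The tops of $\mcC$ lie in $\{\tau(D_{t(i)}) : 1 \le i \le q\} \cup \{\tau(E) : E \in \Coll(\mcB) \setminus (S \cup T)\}$; since untouched columns contribute no $\alpha$, one has $\alpha \notin \topp(\mcC)$ exactly when no $D_{t(i)}$ belongs to $T \cap S$. The main obstacle is therefore the self-loop case $T \cap S \ne \emptyset$: in any one-shot pairwise matching, the mass landing on $T \cap S$ equals the fixed quantity $\sum_{D \in T \cap S} \mu(\base(D))$ regardless of routing, so $\alpha$ is preserved in $\topp(\mcC)$. To overcome this, I would use strong connectivity once more: every $D \in T \cap S$ admits a path in $\Gamma_\mcB$ of bounded length ending at some $D^* \in T \setminus S$. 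Cutting each $D \in T \cap S$ finely enough and allocating its sub-columns along such exit paths, thereby chaining several pairwise stackings into longer combined columns, one arranges every piece of $S$-supply to ultimately sit below a sub-column of some $D^* \in T \setminus S$, whose top lies outside $\alpha$. Feasibility of the resulting balanced transport plan follows from the flow identity above together with the homogeneity of $K$ (iterated applications of Lemma \ref{l:toyexample} and approximate divisibility). The resulting $\mcC$ then satisfies $\alpha \notin \{\tau(C) : C \in \Coll(\mcC)\}$ and introduces no new $\mcA$-atom in the top, giving $N(\mcC) \le N(\mcB) - 1$.
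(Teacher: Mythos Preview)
Your overall architecture matches the paper's: pick an atom $\alpha$ meeting $\topp(\mcB)$, identify the columns $S=\{\tau=\alpha\}$ and $T=\{\beta=\alpha\}$, stack $S$ below $T$, and deal with the self-loop columns $S\cap T$. Your argument that $S\setminus T$ and $T\setminus S$ are nonempty is correct and parallels the paper's extraction of the columns $C,D$. But two genuine gaps remain.

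\emph{Admissibility of $\mcC$ is not automatic.} You assert that strong connectivity of $\Gamma_\mcC$ is ``inherited from $\Gamma_\mcB$'', but an arbitrary matching of the $Y_i$'s with the $Z_i$'s need not produce all the edge types needed to replace a path through $\alpha$ in $\Gamma_\mcB$ by one avoiding $\alpha$ in $\Gamma_\mcC$. The paper uses the third clause of Lemma~\ref{l:toyexample} precisely for this: for every pair $(\alpha_j,\alpha_k)$ realised as $(\beta,\tau)$ of a type-2/type-1 pair in $\mcB''$, one forces some $i$ with $Y_i\subseteq\alpha_j$ and $Z_i\subseteq\varphi(\alpha_k)$, so that the stacked column realises the edge $(\alpha_j,\alpha_k)$ in $\Gamma_\mcC$. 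Without this, a path $C_0,\dots,C_n$ in $\Gamma_\mcB$ passing once through $\alpha$ cannot in general be shortcut in $\Gamma_\mcC$. The paper also cuts the witnesses $C,D$ in two \emph{before} any other operation, so that after the $S\cap T$ step there is still a full supply of type-1 and type-2 columns to feed into Lemma~\ref{l:toyexample}.

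\emph{The $S\cap T$ step is the heart of the proof and is not worked out.} Your one-shot stacking is ill-defined when $S\cap T\neq\emptyset$: for $E\in S\cap T$, $\base(E)$ lies in both $U$ and $V$, so it is partitioned once by the $Y_i$'s and once by the $Z_i$'s, and the sub-column over $Y_i\cap Z_j$ is being asked to sit simultaneously below $D_{t(i)}$ and above $C_{s(j)}$. You recognise this and propose routing along ``exit paths'' in $\Gamma_\mcB$ towards $T\setminus S$, but the measure bookkeeping is left to ``iterated applications of Lemma~\ref{l:toyexample} and approximate divisibility'' without saying what is divided or why it fits. The paper's mechanism is concrete and different from path-routing: use approximate divisibility to self-stack each $E\in S\cap T$ many times on top of itself (legal since $\tau(E)=\beta(E)=\alpha$), producing columns whose total base measure is strictly below both $\mu(\base\tilde C)$ and $\mu(\base\tilde D)$ for every $\mu\in K$; then sandwich each such thin column between a slice of $\tilde C$ (below) and a slice of $\tilde D$ (above). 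This eliminates all columns with $\beta=\tau=\alpha$ in one controlled step, after which the matching via Lemma~\ref{l:toyexample} goes through cleanly. Your sketch does not supply an equivalent mechanism, and without it the construction of $\mcC$ is incomplete.
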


\begin{proof}
We fix $\mcB$, and let $\alpha_1,\ldots,\alpha_N$ denote the atoms of $\mcA$ which intersect $\topp(\mcB)$. 

Recall that, for every column $C$ of $\mcB$, there exists an atom $\tau(C)$ of $\mcA$ such that $\topp(C) \subseteq \tau(C)$; and an atom $\beta(C)$ of $\mcA$ such that $\base(C) \subseteq \varphi(\beta(C))$. By definition, $(C,D) \in \Gamma_\mcB$ if and only if $\tau(C)=\beta(D)$.

Pick a column $C$ such that $\beta(C)\ne \alpha_1$ (the existence of such a column follows from Lemma \ref{l:compatiblepartition}\eqref{eq:compatible2}). Since $\mcB$ is admissible, there exists a path $C=C_0,\ldots,C_p$ such that $\tau(C_p)=\alpha_1$; fix such a path with $p$ minimal. If $\beta(C_p)=\alpha_1$ then either $p=0$ or $\tau(C_{p-1})=\alpha_1$ by definition of $\Gamma_\mcB$; the second possibility contradicts the minimality of $p$. So either 
$C_0$ or $C_p$ gives us a column $C$ such that $\beta(C) \ne \alpha_1$ and $\tau(C)=\alpha_1$. 

We write $\base(C)=U \sqcup V$, where $U$, $V$ are nonempty clopen, and cut $C$ into two finer columns with base $U,V$. Explicitly, assume that $C$ is of the form $(B_{i,0},\ldots,B_{i,n_i})$; for each $j \in \{1,\ldots,n_i\}$ we choose a clopen subset $U_j$ of $B_{i,j}$ such that $\mu(U_j)=\mu(U)$ for all $\mu \in K$, and let $V_j=B_{i,j} \setminus U_j$. Then we form a new partition, where $C$ is replaced by two finer columns, one being $(U,U_1,\ldots,U_{n_i})$ and the other being $(V,V_1,\ldots,V_{n_i})$. The partition we obtain refines $\mcB$ and respects $\varphi$; below, whenever we mention cutting, we apply a similar procedure.

We set aside one of the two columns we just built and name it $\tilde C$.

Considering a path $(D_0,\ldots,D_q)$ of minimal length such that $\beta(D_0)= \alpha_1$ and $\tau(D_q) \ne \alpha_1$, we similarly obtain the existence of a column $D$ such that $\beta(D)=\alpha_1$ and $\tau(D) \ne \alpha_1$. As above, we cut $D$ into two nontrivial subcolumns and call one of those $\tilde D$. We let $\mcB'$ denote the partition that we produced by cutting $C,D$ into two subcolumns each.

There may exist columns in $\mcB'$ such that $\beta(E)=\alpha(E)=\alpha_1$; we wish to reduce to the case where there are no such columns. In case no such column exists, we may directly skip to the next step of the construction. Otherwise, let $k \ge 1$ denote the number of these columns.
For every such $E$, we choose a large integer $l_E$ and use approximate divisibility to write
$$\base(E) = \left(\bigsqcup_{i=1}^{l_E} E_i \right) \sqcup F $$
where for all $\mu \in K$ one has
\begin{eqnarray}
&\forall \mu \in K \ \forall i,j \quad \mu(E_i)=\mu(E_j) \quad \text{ and } \\
&\forall \mu \in K \quad \mu(E_1)+ \mu(F) < \frac{1}{k} \min(\mu(\base(\tilde C)), \mu(\base(\tilde D))) \label{eq:approx}
\end{eqnarray}
We then form a new partition refining $\mcB'$, in which each column $E$ with $\beta(E)=\tau(E)=\alpha_1$ is replaced by two new columns $\tilde E_1$ and $\tilde E_2$, with $\tilde E_1$ being obtained by stacking $l_E$ copies of $E$ with base $E_i$ on top of each other, and $\tilde E_2$ being the remainder (namely, a copy of $E$ with base $F$). Since each of the columns we stacked on top of each other has a base contained in $\varphi(\alpha_1)$ and a top contained in $\alpha_1$, the partition we obtain respects $\varphi$.

The sum of measures of the bases of all columns $\tilde E_1$, $\tilde E_2$ is strictly less than $\mu(\base (\tilde C))$ as well as $\mu(\base(\tilde D))$, for all $\mu \in K$; this is what we gained by our use of approximate divisibility in \eqref{eq:approx}. Using the fact that $K$ is a dynamical simplex, and cutting $\tilde C$, $\tilde D$ as necessary, we may then form new columns by stacking each $\tilde E_1$ on top of a copy of $\tilde C$, and then a copy of $\tilde D$ on top of that; and similarly for each $\tilde E_2$. 

Intuitively, we have first used approximate divisibility to make the columns with $\beta(E)=\tau(E)=\alpha_1$ very thin, and then we have stacked them in the middle of some new columns.
We have thus formed a new $K$-partition $\mcB''$ which refines $\mcB'$ and respects $\varphi$, since $\tau(\tilde C) = \beta(\tilde D)=\alpha_1$. 

Every column $F$ of $\mcB''$ is of one of the following three types:
\begin{itemize}
\item $\beta(F)=\alpha_1$ and $\tau(F) \ne \alpha_1$ (type $1$);
\item $\beta(F) \ne \alpha_1$ and $\tau(F)=\alpha_1$ (type $2$); 
\item $\beta(F) \ne \alpha_1$ and $\tau(F)\ne\alpha_1$ (type $3$).
\end{itemize}

Let $U_1,\ldots,U_n$ denote the tops of the columns of type $1$, and $V_1,\ldots,V_m$ denote the bases of the columns of type $2$. Lemma \ref{l:compatiblepartition}\eqref{eq:compatible2} implies that 
\[\forall \mu \in K \quad \mu\left(\bigsqcup_{i=1}^n U_i \right) =\mu\left(\bigsqcup_{j=1}^m V_j \right) \]

Using Lemma \ref{l:toyexample}, we may find clopen subsets $Y_1,\ldots,Y_q$, $Z_1,\ldots,Z_q$ such that
\begin{itemize}
\item For all $\mu \in K$ and for all $i$ one has $\mu(Y_i)=\mu(Z_i)$;
\item $Y_1,\ldots,Y_q$ partition $\bigsqcup_{i=1}^n U_i$, and $Z_1,\ldots,Z_q$ partition $\bigsqcup_{j=1}^m V_j$;
\item For all $i$ there exists $j,k$ such that $Y_i \subseteq U_j$ and $Z_i \subseteq V_k$;
\item For all $j,k$, if there exists a column $G$ of type $1$ such that $\tau(G) = \alpha_j$ and a column $H$ of type $2$ such that $\beta(H)=\alpha_k$, then there exists $i$ such that $Y_i \subseteq \alpha_j$ and $Z_i \subseteq \varphi(\alpha_k)$.
\end{itemize}

We then cut some columns of $\mcB''$ so as to form, for each $i \in \{1,\ldots,q\}$, a column with top $Y_i$ and another with base $Z_i$; and finally form a $K$-partition $\mcC$ refining $\mcB''$ by stacking each new column with top equal to $Y_i$ on top of the new column with base equal to $Z_i$. 

By construction $\mcC$ refines $\mcB$; $\mcC$ is compatible with $\varphi$; for every atom $\alpha$ of $\mcA$ $\varphi(\alpha)$ is a union of atoms of $\mcC$; and $\topp(\mcC) \cap \alpha_1 = \emptyset$ so $N(\mcC) \le N(\mcB)-1$.
It remains to prove that $\mcC$ is admissible (note that the admissibility of $\mcB$ was key to our construction, since it enabled us to produce the columns $\tilde C$, $\tilde D$, which in turn allowed us to get rid of all the columns whose top is contained in $\alpha_1$). 

During the construction, we ensured that the following properties hold: 
\begin{itemize}
\item[•] Any column $A$ of $\mcB$ such that $\tau(A) \ne \alpha_1$ and $\beta(A) \ne \alpha_1$ is also a column of $\mcC$ (these columns have not been modified).
\item[•] Given any $j,k \ge 2$ and columns $A,B$ of $\mcB$ such that $\beta(A)=\alpha_j$, $\tau(A)=\alpha_1$, $\beta(B)=\alpha_1$, $\tau(B)=\alpha_k$, there exists a column $E$ of $\mcC$ such that $\beta(E)=\alpha_j$ and $\tau(E)=\alpha_k$. We guaranteed this by our use of Lemma \ref{l:toyexample}, and by cutting $C,D$ in two pieces at the beginning so that nothing had been lost before applying Lemma \ref{l:toyexample}.

\end{itemize}
Now, let $A$, $B$ be two columns of $\mcC$. Since $\mcB$ is admissible, there exist columns $C_0,\ldots,C_{n}$ of $\mcB$  such that
\begin{itemize}
\item $\tau(C_0)= \tau(A)$ and $\beta(C_n)=\beta(B)$.
\item For all $i\le n-1$, $\tau(C_i)= \beta(C_{i+1})$.
\end{itemize}
If $\tau(C_i) \ne \alpha_1$ for each $i$, then  $A,C_1,\ldots,C_{n-1},B$ form a path joining $A$ and $B$ in $\Gamma_\mcC$.
Else, there exists some $i$ such that $\tau(C_i)= \alpha_1= \beta(C_{i+1})$; and (considering the shortest possible path) we may as well assume that there is a unique such $i$. By construction, there exists a column $D$ of $\mcC$ such that $\beta(D)= \beta(C_i)$ and $\tau(D)= \tau(C_{i+1})$. Then the path $A,C_1,\ldots,C_{i-1},D,C_{i+2},\ldots,B$ joins $A$ and $B$ in $\Gamma_\mcC$. 
\end{proof}

\section{Comeager conjugacy classes and odometers}

We again fix a dynamical simplex $K$; given a $K$-partition $\mcA$, we consider 
\begin{eqnarray*} O_\mcA &=& \{ g \in G_K^* \colon g \textrm{ is compatible with } \mcA \} \\
G_\mcA &=& \{ g \in G_K \colon \forall A \in \mcA \ g(A)=A\} 
\end{eqnarray*}

The sets $O_\mcA$ form a basis of the topology of $G_K^*$; the subgroups $G_\mcA$ form a basis of neighborhoods of the identity in $G_K$.

In our context, we may formulate Rosendal's criterion for the existence of comeager conjugacy classes for a Polish group action as follows (for a proof, see \cite[Proposition~3.2]{BenYaacov2017}).

\begin{prop}\label{prop:criterion}
There exists a comeager conjucacy class in $G_K^*$ if, and only if
\begin{enumerate}
\item The action of $G_K$ on $G_K^*$ by conjugation is topologically transitive .
\item \label{item:WAP} For any $K$-partition $\mcA$, there exists a $K$-partition $\mcB$ refining $\mcA$ such that, for any $K$-partitions $\mcC$, $\mcD$ refining $\mcB$ there exists $g \in G_\mcA$ such that $g O_\mcC g^{-1} \cap O_\mcD \ne \emptyset$.
\end{enumerate}
\end{prop}

We recall that a continuous action of a Polish group $G$ on a Polish space $X$ is \emph{topologically transitive} if it admits a dense orbit or, equivalently in this context, if for any two nonempty open subsets $U,V$ of $X$ there exists $g \in G$ such that $g(U) \cap V \ne \emptyset$.

The first part of the above criterion turns out to be always satisfied.

\begin{prop}\label{prop:saturated}
The conjugacy class of any $K$-saturated element is dense in $G_K^*$.\end{prop}

\begin{proof}
Fix a $K$-saturated $g$ and a $K$-partition $\mcA$. By saturation of $g$, there exists a Kakutani-Rokhlin partition $\mcB=(B_{i,j})$ for $g$ such that any two atoms of $\mcA$ belonging to the same column of $\mcA$ are $\mcB$-equivalent (see Lemma \ref{lemma:balanced} for the definition). Thus, up to a reordering of the atoms within each of its columns, $\mcB$ is a refinement of $\mcA$; in other words, there  exists for all $i$ a bijection $\sigma_i$ of $\{0,\ldots,n_i(\mcB)\}$ such that $(B_{i,\sigma_i(j)})$ refines $\mcA$.

We may find $h \in G_K$ such that for all $i$ and all $j \in \{0,\ldots,n_i(\mcB)\}$ one has $h(B_{i,j})=B_{i,\sigma_i(j)}$. Then $(B_{i,\sigma_i(j)})$ is a Kakutani-Rokhlin partition for $hgh^{-1}$, which proves that $hgh^{-1}$ belongs to $O_\mcA$.
\end{proof}

Our aim is now to prove that the second condition of Proposition \ref{prop:criterion} is satisfied exactly when $K$ is a singleton $\{\mu\}$, and $\mu$ is the unique invariant measure of an odometer.

Given integers $n_1,\ldots,n_k \ge 0$, with at least one being different from $0$, we set
$$r(n_1,\ldots,n_k)= \frac{1}{\sum_{i=1}^k n_i} (n_1,\ldots,n_k) $$
Let $d=\textrm{gcd}(n_1,\ldots,n_k)$ and $n_i'= \frac{n_i}{d}$; observe that $r(n_1,\ldots,n_k)= r(n_1',\ldots,n_k')$ and
$$r(n_1,\ldots,n_k)= r(m_1,\ldots,m_k) \Leftrightarrow (n_1',\ldots,n_k')= (m_1',\ldots,m_k') $$

From now until the end of the proof of Lemma \ref{lem:incompatible}, we fix a $K$-partition $\mcA$, with columns $C_1,\ldots,C_k$. For any $K$-partition $\mcB$ refining $\mcA$, and any column $D$ of $\mcB$, we let $n_i(D)$ denote the number of copies of $C_i$ contained in $D$. 

We define $r(D)=r(n_1(D),\ldots,n_k(D))$ and call it the \emph{repartition} of $D$; as above set $n_i'(D)=  \frac{n_i(D)}{\textrm{gcd}(n_1(D),\ldots,n_k(D))}$

\begin{lemma}\label{l:one_column}
Let $\mcB$ be a $K$-partition refining $\mcA$; assume that for all columns $D_1,D_2$ of $\mcB$ one has $r(D_1)=r(D_2)$. Then there exists a $K$-partition $\mcC$ refining $\mcA$ and which has a single column.
\end{lemma}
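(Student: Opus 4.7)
The plan is to construct $\mcC$ directly by a canonical regrouping of the atoms of $\mcB$, with no further cutting-and-stacking needed. Writing $(n_1',\ldots,n_k')$ for the common reduced repartition, each column $D$ of $\mcB$ contains exactly $k(D)\,n_i'$ copies of $C_i$ for a single integer $k(D) \geq 1$, namely $k(D) = \gcd(n_1(D),\ldots,n_k(D))$. Setting $h_i = n_i(\mcA)$ and $H := \sum_i n_i'(h_i+1)$, the total number of atoms in every column $D$ of $\mcB$ is $k(D) H$. I expect the single column of $\mcC$ to have height exactly $H$, containing $n_i'$ copies of $C_i$ in some fixed order, with every atom of common $\mu$-measure $1/H$ for every $\mu \in K$ (the value $1/H$ will be forced on us by the computation).

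Fix once and for all an ordering of the copies for the putative single column of $\mcC$ (for instance, $n_1'$ copies of $C_1$, then $n_2'$ copies of $C_2$, and so on). Inside each column $D$ of $\mcB$, enumerate the copies of $C_i$ in the order they appear from base to top, and partition them into $n_i'$ groups $G_{D,i}^{(1)},\ldots,G_{D,i}^{(n_i')}$ of size $k(D)$ each, according to the congruence class modulo $n_i'$ of the copy's index. For $i \in \{1,\ldots,k\}$, $l \in \{0,\ldots,h_i\}$ and $p \in \{1,\ldots,n_i'\}$, set
\[
A_{i,l}^{(p)} = \bigsqcup_{D \in \Coll(\mcB)} \bigsqcup_{\kappa \in G_{D,i}^{(p)}} (\text{the } l\text{-th atom of the copy }\kappa).
\]
For fixed $i,l$ the family $\{A_{i,l}^{(p)}\}_{p}$ is a clopen partition of $A_{i,l}$, because every atom of $\mcB$ contained in $A_{i,l}$ belongs to a unique copy of $C_i$ in a unique column $D$, and that copy lies in a unique group $G_{D,i}^{(p)}$. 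Define $\mcC$ by listing these clopen sets in the order dictated by the ordering chosen above.

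The heart of the argument is a short measure computation: every atom in column $D$ has common $\mu$-measure $\mu(D)/(k(D) H)$, and $G_{D,i}^{(p)}$ contributes exactly $k(D)$ atoms at each level $l$, so the $k(D)$'s cancel when summing over $D$ and
\[
\mu(A_{i,l}^{(p)}) = \sum_D k(D)\cdot \frac{\mu(D)}{k(D) H} = \frac{1}{H}\sum_D \mu(D) = \frac{1}{H},
\]
independently of $i,l,p$ and of $\mu \in K$. Hence $\mcC$ is a $K$-partition, and the four conditions defining ``$\mcC$ refines $\mcA$'' follow immediately from the way we ordered the copies (the successor of $A_{i,l}^{(p)}$ with $l<h_i$ is $A_{i,l+1}^{(p)}\subseteq A_{i,l+1}$, while the successor of a top-level atom $A_{i,h_i}^{(p)}$ is the base of the next copy in our order, hence contained in the base of $\mcA$).

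The only non-trivial point is the measure calculation, and this is precisely where the hypothesis $r(D_1)=r(D_2)$ enters: it forces the height of each column to be an integer multiple of the same $H$, so that a single integer $k(D)$ controls each column and the $k(D)$'s can be cancelled uniformly. Apart from this, the proof is pure bookkeeping; in particular no appeal to cutting-and-stacking or to the dynamical simplex axioms beyond those already used to produce $\mcB$ is required.
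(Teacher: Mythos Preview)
Your proof is correct and follows essentially the same idea as the paper: since every column of $\mcB$ has height $k(D)H$ for the common value $H=\sum_i n_i'\,\mathrm{height}(C_i)$, one can collapse everything to a single column of height $H$ by taking, for each position in a fixed pattern of $n_1'$ copies of $C_1$ followed by $n_2'$ copies of $C_2$, etc., the union of the corresponding atoms across all columns. The only presentational difference is that the paper first \emph{reorders} the atoms within each column of $\mcB$ so that the pattern repeats literally, then \emph{separates} each column into $k(D)$ subcolumns of height $H$, and finally unions levelwise; you instead skip the reordering and separating and form the unions directly via the congruence-class grouping, which obliges you to carry out the measure computation $\mu(A_{i,l}^{(p)})=1/H$ explicitly (the paper's version makes this automatic because after separating all columns already have the same height). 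Both routes use only the combinatorics of $K$-partitions and make no further appeal to the dynamical-simplex axioms, and neither gains anything substantive over the other.
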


\begin{proof}
By assumption, $n_k'(D)$ does not depend on the column $D$ of $\mcB$, so we let $n_k'=n_k'(D)$ for some (any) column $D$ of $\mcB$.

For any column $D$ of $\mcB$, there exists some integer $M$ such that $D$ is made up of $M n_1'$ copies of $C_1$, \ldots, $M n_k'$ copies of $C_k$ stacked on top of each other. Up to reordering, we see that there exists a $K$-partition refining $\mcA$ such that each column is obtained by stacking $n_1'$ copies of $C_1$ on top of each other, then $n_2'$ copies of $C_2$,\ldots, then $n_k'$ copies of $C_k$; and repeating this pattern some number $M$ of times (where $M$ depends on the column). 

By separating at the beginning of each of these patterns, and putting the obtained subcolumns next to each other, we see that $\mcA$ has a refinement where all columns are made up of exactly $n_1'$ copies of $C_1$, \ldots, $n_k'$ copies of $C_k$ stacked on top of each other. We denote by $N$ the common height of these columns. We have also ensured that the $n_1'$ copies of $C_1$ are at the bottom, followed by $n_2'$ copies of $C_2$, etc.

Denote this refinement $\mcB=(B_{i,j})_{0 \le i \le m, 0 \le j < N}$, and set for all $j \in \{0,\ldots,N-1\}$
\[C_j = \bigsqcup_{0 \le i \le m} B_{i,j} \]
Then $(C_0,\ldots,C_{N-1})$ is a $K$-partition which refines $\mcA$ and has a single column (which consists of $n_1'$ copies of $C_1$ followed by $n_2'$ copies of $C_2$, and so on).
\end{proof}

\begin{lemma}\label{lem:incompatible}
Assume that $\mcB$ is a $K$-partition refining $\mcA$, and that there exist two columns $D_1$, $D_2$ of $\mcB$ with $r(D_1) \ne r(D_2)$. Then there exist $K$-partitions $\mcC$, $\mcD$ refining $\mcB$ such that for any $f \in G_\mcA$ one has $f O_\mcC f^{-1} \cap O_\mcD = \emptyset$.
\end{lemma}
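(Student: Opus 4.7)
The plan is to extract from each $K$-partition $\mcE$ refining $\mcA$ a subshift $\Sigma_\mcE \subseteq \{1,\ldots,k\}^{\mathbb{Z}}$ --- the set of biinfinite sequences obtained by freely concatenating the column words $\mathrm{word}(E_p) := (i : E_{p,j} \subseteq A_{i,\ast})_{0 \le j \le n_p}$ of the columns $E_p$ of $\mcE$ --- and to use it as a conjugation invariant for the $G_\mcA$-action on $G_K^*$. Indeed, if $g \in O_\mcE$, then for every $x \in X$ the $\mcA$-column itinerary $(i_n)_{n \in \mathbb{Z}}$ defined by $g^n(x) \in A_{i_n,\ast}$ lies in $\Sigma_\mcE$; and for $f \in G_\mcA$ and $h = fgf^{-1}$ one has $h^n(y) = f(g^n(f^{-1}y))$ in the same atom of $\mcA$ as $g^n(f^{-1}y)$, so the set of itineraries of $h$ coincides with that of $g$. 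Hence if some $f \in G_\mcA$ were to satisfy $fO_\mcC f^{-1} \cap O_\mcD \ne \emptyset$, the common itineraries of any witnessing pair $g \in O_\mcC$, $h \in O_\mcD$ would have to lie inside $\Sigma_\mcC \cap \Sigma_\mcD$.

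For the construction, I would use approximate divisibility of $K$ to refine $\mcB$ into $\mcC$ whose columns fall into two classes: a large family of \emph{$D_1$-pure} columns, each obtained by stacking $M$ consecutive copies of $D_1$ (for a large integer $M$ to be chosen below), together with a smaller auxiliary family of single-copy $D_2$-columns needed to cover the $D_2$-mass of $X$. Analogously, $\mcD$ would consist of $D_2$-pure columns ($M$-stacks of $D_2$) with auxiliary single-copy $D_1$-columns. The column words of $\mcC$ are then $\mathrm{word}(D_1)^M$ and $\mathrm{word}(D_2)$, while those of $\mcD$ are $\mathrm{word}(D_2)^M$ and $\mathrm{word}(D_1)$, so any biinfinite sequence in $\Sigma_\mcC \cap \Sigma_\mcD$ would have to admit two parsings whose block boundaries are simultaneously aligned to $M$-multiples of $|\mathrm{word}(D_1)|$ and $|\mathrm{word}(D_2)|$.

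The main obstacle is a combinatorial-measure analysis showing that, for $M$ sufficiently large, this doubly-aligned structure, together with the hypothesis $r(D_1) \ne r(D_2)$, forces every element of $\Sigma_\mcC \cap \Sigma_\mcD$ to be eventually periodic of a form incompatible with chain transitivity of a homeomorphism of the Cantor space. The delicate step is controlling the contribution of the auxiliary single-copy columns: their total $K$-measure must be made arbitrarily small via \cite[Corollary~2.6]{Melleray2018}, so that for large $M$ the joint alignment cannot be broken by them and the rigid periodicity forced on $\Sigma_\mcC \cap \Sigma_\mcD$ rules out the existence of any chain-transitive $h$ whose orbit itineraries lie entirely in this intersection.
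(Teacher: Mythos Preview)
Your observation that the $\mcA$-column itinerary of an orbit is a $G_\mcA$-conjugation invariant is correct and is also the invariant underlying the paper's argument. The difficulties are in the construction and in the final step.

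\textbf{The construction of $\mcC,\mcD$ does not work as described.} You say the columns of $\mcC$ are of two kinds: $M$-fold stacks of $D_1$, and single copies of $D_2$. But $\mcB$ may have many columns besides $D_1,D_2$, and even if it has only these two, approximate divisibility leaves a remainder inside $\base(D_1)$ that must become an extra short column (a single copy of $D_1$). Once such a column is present, the column-word set of $\mcC$ contains $\mathrm{word}(D_1)$ itself, and your subshift $\Sigma_\mcC$ collapses to the set of \emph{all} free concatenations of $\mathrm{word}(D_1)$ and $\mathrm{word}(D_2)$; the same happens for $\Sigma_\mcD$, so $\Sigma_\mcC=\Sigma_\mcD$ and the invariant separates nothing. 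Your appeal to ``small $K$-measure'' of the auxiliary columns cannot repair this: membership of a sequence in $\Sigma_\mcE$ is a purely combinatorial condition on column words, and an element of $O_\mcC$ (which is only required to preserve the measures in $K$ and have no clopen invariant set) may well have orbits that visit a small-measure column with arbitrary frequency. Measure does not bound itinerary statistics here.

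\textbf{Even granting the intended column-word sets, the ``periodicity'' step is not an argument.} The intersection $\Sigma_\mcC\cap\Sigma_\mcD$ is never empty: it contains $\mathrm{word}(D_1)^{\mathbb Z}$ and $\mathrm{word}(D_2)^{\mathbb Z}$ for every $M$. Showing these (and whatever else lies in the intersection) are incompatible with \emph{chain transitivity} is exactly the heart of the matter, and you have not indicated how to do it; note that an element of $G_K^*$ can perfectly well have every $\mcA$-itinerary periodic (e.g.\ when $\mcA$ is a single-column Kakutani--Rokhlin partition), so periodicity alone is not a contradiction.

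The paper sidesteps both issues. It builds $\mcC$ so that \emph{every} column begins with $s$ consecutive copies of $D_1$ (this uses the dynamical simplex property, not approximate divisibility, and leaves no auxiliary columns), and $\mcD$ so that every column begins with many copies of $D_2$. Instead of a subshift, it reads off from the itinerary a concrete finite-window statistic: for a window of length $N=sH$ (with $H=\mathrm{height}(D_1)=\mathrm{height}(D_2)$ after a preliminary equalisation), the empirical repartition vector $s_N(g;x)$. For $g\in O_\mcC$ every point reaches, within a bounded number $M$ of steps, a position where $s_N$ equals $r(D_1)$ exactly; for $g\in O_\mcD$ there is a point from which $s_N$ stays within $\frac{kH}{s-1}$ of $r(D_2)$ for at least $M$ consecutive steps. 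Choosing $s$ so that $\frac{kH}{s-1}<\|r(D_1)-r(D_2)\|_\infty$ gives the incompatibility directly, with no symbolic-dynamics analysis needed.
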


\begin{proof}
To simplify notations below, we note that one can further refine $\mcB$ in such a way that there exist columns with distinct repartitions but the same height: if $D_1$ has height $a$ and $D_2$ has height $b$, form a new partition with one column obtained by stacking $b$ copies of $D_1$ on top of each other, and another by stacking $a$ copies of $D_2$ on top of each other; this is possible as long as one chooses a small enough base for these columns. These new columns have the same repartitions as $D_1$ and $D_2$, and both have height equal to $ab$. So we assume below that $\textrm{height}(D_1)=\textrm{height}(D_2)=H$. 

We use the fact that $D_1$, $D_2$ have different repartitions to build incompatible $\mcC$, $\mcD$ refining $\mcB$ . The intuition is as follows: first choose $\mcC$ so that every column of $\mcC$ begins with many copies of $D_1$; then every $K$-partition $\mcE$ refining $\mcC$ will be such that every column contains many consecutive copies of $D_1$, and to move from an atom of $\mcE$ to an atom at the bottom of these many successive copies of $D_1$ one only needs to move by at most the largest height of any column of $\mcC$. So, if every column of a $K$-partition $\mcD$ begins with many copies of $D_2$, we expect $\mcC$ and $\mcD$ to be incompatible. To turn this intuition into a proof, we use a counting argument.

We now turn to the details of this argument.  Denote by $m_i$ the number of copies of $C_i$ occuring in $D_2$ and set $S=\sum_{i=1}^k m_i$.

Consider some $g \in O_\mcA$, $x \in X$ and some $N \in \N$ bigger than the height of any column in $\mcA$. Let $i_1< \ldots< i_p$ be the indices in $\{0,\ldots,N\}$ for which $g^i(x)$ belongs to the base of $\mcA$, and note that $x,\ldots,g^{i_1-1}(x)$ all belong to the same column of $\mcA$, as do $g^{i_j}(x),\ldots,g^{i_{j+1}-1}(x))$ for all $j \in \{1,\ldots,p-1\}$ and $g^{i_p}(x),\ldots,g^N(x)$. For $i \in \{1,\ldots,k\}$, we let $n_{i,N}(g;x)$ denote the number of times $C_i$ has occured in this sequence, and let 
\[s_N(g;x)=\frac{1}{\sum_{i=1}^k n_{i,N}(x)}(n_{1,N}(x),\ldots,n_{k,N}(x))\]

Fix an integer $s \ge 2$ such that $\frac{kH}{s-1} < \|r(D_1)-r(D_2)\|_\infty$. Using the fact that $K$ is a dynamical simplex, we may build a $K$-partition $\mcC$ refining $\mcB$ and such that all the columns of $\mcC$ begin with $s$ copies of $D_1$. Set $N =s H -1$, and fix an integer $M$ which is larger than the height of any column of $\mcC$ (hence also $M \ge N$).

Note that for any $g \in O_{\mcC}$ and any $x \in X$ there exists some $i \in \{0,\ldots,M-1\}$ such that $s_N(g;g^i(x))=r(D_1)$. Indeed, there exists some $i \in \{0,\ldots,M-1\}$ such that $g^i(x)$ belongs to the base of $\mcC$, and then $g^i(x),\ldots,g^{N+i}(x)$ are going through $s$ copies of $D_1$. 

Thus we have the following:
\begin{equation}\label{eq:incompatibility}
\forall f \in G_\mcA \ \forall h \in f O_\mcC f^{-1}\ \forall x \in X \ \exists i \in \{0,\ldots,M-1\} \quad s_N(h;h^i(x))=r(D_1)
\end{equation}
Indeed, this follows from the observation in the previous paragraph and the fact that $\mcC$ refines $\mcA$ and every $f \in G_\mcA$ maps every atom of $\mcA$ to itself. 

Let $\mcD$ be a $K$-partition refining $\mcB$ and such that all columns of $\mcD$ begin with at least $3M$ copies of $D_2$; denote by $U$ the base of $\mcD$. Consider $g \in O_\mcD$ and  $x \in g^{j}(U)$ for some $j \in \{0,\ldots,M-1\}$. 
There exists $j_1 \in \{0,\ldots, H-1\}$ such that $g^{j_1}(x)$ belongs to the base of $D_2$. Since the points $g^{j_1}(x),\ldots,g^{j_1-1+(s-1)H}(x)$ visit $(s-1)$ copies of $D_2$ and make up all but $H$ of the elements of $g(x),\ldots,g^N(x)$, we see that for all $i \in \{1,\ldots,k\}$ one has
\[(s-1) m_i \le n_{i,N}(g;x) \le (s-1)m_i + H \]
Denote $T=\sum_{i=1}^k n_{i,N}(g;x)$; by summing these inequalities (which is overkill, but does the job) we get
\[(s-1)S \le T \le (s-1)S +kH\]
Thus
\[\forall i \in \{1,\ldots,k\} \quad  \frac{(s-1)m_ i}{(s-1)S+kH} - \frac{m_i}{S} \le \frac{n_{i,N}(g;x)}{T} - \frac{m_i}{S} \le \frac{(s-1)m_i+H}{(s-1)S} - \frac{m_i}{S} \]
The right-hand side of this inequality is smaller than $\frac{H}{s-1}$. 
Also 
\[ 0 \le \frac{m_i}{S} - \frac{(s-1)m_i}{(s-1)S+kH} = \frac{kH m_i}{S((s-1)S+kH)} \le \frac{kH}{(s-1)}\]
This shows that $\|r(D_2)-s_N(g;x)\|_\infty \le \frac{kH}{s-1}$, thus $s_N(g;x) \ne r(D_1)$. 

It follows that $s_N(g;g^i(x)) \ne r(D_1)$  for all $i \in \{0,\ldots,M-1\}$ and all $x \in U$. 
Using \eqref{eq:incompatibility}, we conclude that $g \not \in fO_\mcC f^{-1}$ for any $f \in G_\mcA$; since $g$ was an arbitrary element of $O_\mcD$, we obtain as promised that $f O_\mcC f^{-1} \cap O_\mcD = \emptyset$ for any $f \in G_\mcA$.
\end{proof}

We briefly recall the definition of an odometer: fix a sequence $\bar k = (k_i)$ of integers $\ge 2$, and let 
\[Y_{\bar k} = \prod_{i=0}^{+\infty} \{0,\ldots,k_i-1\} \]
Then $Y_{\bar k}$ is a Cantor space; the corresponding odometer is the map $T_{\bar k} \colon Y \to Y$ defined by ``adding $1$ with right-carry". Formally, if $y \in Y$ is such that $y(i)< k_i - 1$ for some $i$, then one finds the smallest such $i$ and sets 
\[T_{\bar k}(y)=(\!\!\!\!\!\!\!\!\! \underbrace{0,\ldots,0}_{\textrm{ a string of } i \textrm{ zeroes}}\!\!\! \!\!\! \!,y(i)+1,y_{i+1},y_{i+2},\ldots ) \]
If there is no such $i$, then we set $T_{\bar k}(y)(i)=0$ for all $i$. 

We say that a map $g \colon X \to X$ is an odometer if there exists a sequence $\bar k$ and a homeomorphism $h \colon Y_{\bar k} \to X$ such that $g=h T_{\bar k} h^{-1}$. 

An alternative (and equivalent) description of odometers, up to isomorphism, is as follows: let $n_i$ be an increasing sequence of integers, with $n_i$ dividing $n_{i+1}$ for all $i$. Let $G_i$ denote the cyclic group $\Z/n_i \Z$, and $\pi_{i,j}$ the natural projection from $G_i$ to $G_j$ for $i \ge j$. The inverse limit $G$ of the family $(G_i,(\pi_{i,j}))$ is a compact group. Let $u=(1,1,\ldots) \in G$; the subgroup generated by $u$ is dense in $G$. Topologically, $G$ is a Cantor space, and the associated odometer is the map $x \mapsto x+u$. The unique invariant measure associated to this odometer is the Haar measure on $G$.

Note that for any odometer there is a natural sequence of Kakutani-Rokhlin partitions $(\mcA_n)$ which have exactly one column each, obtained by taking as base of $\mcA_n$ the set $\{y \in Y_{\bar k} \colon \forall i \le n \ y(i)=0\}$. It is easily seen that the existence of such a sequence actually characterizes odometers (\cite[Theorem~4.6]{Bezuglyi2006}, a fact we will use below). This property also implies that odometers are strictly ergodic and saturated.

\begin{theorem}\label{t:comeager_orbits}
There exists a comeager conjugacy class in $G_K^*$ if, and only if $K$ is a singleton $\{\mu\}$, and $\mu$ is the unique invariant probability measure for some odometer $g$. In that case, the conjugacy class of $g$ is comeager in $G_K^*$.
\end{theorem}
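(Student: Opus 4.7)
The plan is to prove the two directions separately, using different tools.

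For the ``if'' direction, assume $K=\{\mu\}$ with $\mu$ the unique invariant measure of an odometer $g$. I would first verify that $g$ is $K$-saturated: its natural single-column Kakutani--Rokhlin partitions $(\mcE_n)$ have atoms of equal $\mu$-measure, and for $n$ large enough any two clopens $A,B$ with $\mu(A)=\mu(B)$ become unions of the same number of atoms of $\mcE_n$ (namely $\mu(A)/\mu(E_n^0)$ each). By Proposition~\ref{prop:saturated} the conjugacy class of $g$ is then dense in $G_K^*$. To upgrade density to comeagerness I would argue that the set of odometers in $G_K^*$ is $G_\delta$: by the Bezuglyi--Kwiatkowski--Medynets characterization, $h\in G_K^*$ is an odometer exactly when, for every finite clopen partition $\mcP$, some single-column Kakutani--Rokhlin partition of $h$ refines $\mcP$; each such condition defines an open subset of $G_K^*$ (any $h'$ sufficiently close to $h$ retains the same KR-partition), and the countable intersection over all $\mcP$ yields the $G_\delta$. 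Finally, all odometers in $G_K^*$ are $G_K$-conjugate to $g$: Akin's classification shows that two odometers sharing the same good invariant measure have the same associated supernatural number and are conjugate in $\Homeo(X)$, and unique ergodicity forces this conjugating homeomorphism to preserve $\mu$ and hence to lie in $G_K$. Thus the conjugacy class of $g$ is dense $G_\delta$, hence comeager.

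For the ``only if'' direction, assume there is a comeager conjugacy class in $G_K^*$ and apply Proposition~\ref{prop:criterion}. Given an arbitrary $K$-partition $\mcA$, the WAP condition supplies $\mcB$ refining $\mcA$ with its joint-embedding property; Lemma~\ref{lem:incompatible} then forces all columns of $\mcB$ to share a common repartition, and Lemma~\ref{l:one_column} produces a single-column $K$-partition refining $\mcA$. Starting from the trivial one-column $K$-partition, I would interleave a fixed dense sequence of clopens with this procedure (using Lemma~\ref{lemma:balanced} iteratively to turn each new clopen into a union of atoms of the current $K$-partition, then invoking the previous paragraph to obtain a single-column refinement), and so construct a refining sequence $(\mcA_n)$ of single-column $K$-partitions whose atoms generate the clopen Boolean algebra. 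The unique homeomorphism $g$ compatible with all $\mcA_n$ is then an odometer by the refining single-column characterization, and a standard inductive argument in the spirit of Proposition~\ref{prop:fundamental} makes it $K$-saturated. Its set of invariant measures therefore equals $K$; since odometers are uniquely ergodic, $K=\{\mu\}$ with $\mu$ the invariant measure of $g$, as required.

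The main obstacle I expect is the inductive construction in the ``only if'' direction: ensuring that each stage yields a $K$-partition which simultaneously refines the previous single-column $K$-partition (in the rigid sense of the definition, including successor conditions) and turns a prescribed new clopen into a union of its atoms. This mixes the cyclic rigidity of single-column partitions with the measure-theoretic homogeneity of the dynamical simplex encoded in Lemmas~\ref{l:toyexample} and~\ref{lemma:balanced}, and requires some care. Once it is in hand, the classification of odometers by their good invariant measure together with the already-established saturation machinery complete the argument in both directions.
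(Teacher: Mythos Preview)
Your proposal is correct in both directions, and your ``only if'' argument is essentially the one in the paper: invoke Proposition~\ref{prop:criterion}, use Lemma~\ref{lem:incompatible} to force equal repartitions, collapse to a single column via Lemma~\ref{l:one_column}, and iterate (with Lemma~\ref{lemma:balanced}) to produce a $K$-saturated odometer. The obstacle you flag is real but mild: Lemma~\ref{l:one_column} already returns a $K$-partition refinement in the strict sense, so applying it to the output of Lemma~\ref{lemma:balanced} gives exactly the nested single-column sequence you need.

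Your ``if'' direction, however, follows a genuinely different route from the paper. The paper does not argue that the set of odometers is $G_\delta$; instead it fixes a single-column Kakutani--Rokhlin partition $\mcA$ for $g$ and proves directly that $\{hgh^{-1}\colon h\in G_\mcA\}=\Omega(g)\cap O_\mcA$, i.e.\ that the orbit map $h\mapsto hgh^{-1}$ is open, and then invokes Effros' theorem to conclude that $\Omega(g)$ is comeager in its closure $G_{\{\mu\}}^*$. Your approach is more elementary: you bypass Effros entirely by observing that ``having a single-column Kakutani--Rokhlin partition refining $\mcP$'' is an open condition for each clopen partition $\mcP$, so odometers form a $G_\delta$ in $G_K^*$, and then you appeal to the classification of odometers by supernatural number (determined by the clopen value set of $\mu$) together with unique ergodicity to see that any odometer in $G_{\{\mu\}}^*$ is $G_K$-conjugate to $g$. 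Both arguments are short; yours trades the abstract Effros machinery for a concrete identification of the comeager set, while the paper's trades the external classification input for a self-contained openness computation.
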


\begin{proof}
Assume that there exists a comeager conjugacy class in $G_K^*$. Since \eqref{item:WAP} of Proposition \ref{prop:criterion} is satisfied, any $K$-partition $\mcA$ admits a refinement $\mcB$ with a unique column. Indeed, for $\mcB$ to witness that this condition holds, Lemma \ref{lem:incompatible} shows that it is necessary that all columns of $\mcB$ have the same repartition, and we saw in Lemma \ref{l:one_column} that if $\mcA$ admits such a refinement then it admits one with a single column.

This allows us to build a $K$-saturated $g \in G_K$ with a sequence of Kakutani-Rokhlin partitions which each have exactly one column, following the same construction as in \cite{Ibarlucia2016}. Such a $g$ is an odometer, hence $K=\{\mu\}$ where $\mu$ is the unique Borel invariant probability measure for $g$.

Conversely, let $g$ be an odometer, and let $\mu$ be the unique $g$-invariant Borel probability measure. Since $g$ is $\{\mu\}$-saturated, we know from Proposition \ref{prop:saturated} that $g$ has a dense conjugacy class in $G_{\mu}^*$. Denote this conjugacy class by $\Omega(g)$.

Given a Kakutani-Rokhlin partition $\mcA$ for $g$ with one column, we claim that 
\[ \left\{ h g h^{-1} \colon h \in G_\mcA \right\} = \Omega(g) \cap O_ \mcA \]
Indeed, inclusion from left-to-right is immediate. To see the converse inclusion, pick $f \in \Omega(g) \cap O_{\mcA}$; then $\mcA$ is a Kakutani-Rokhlin partition for both $f$ and $g$. Using the fact that $f,g$ are conjugate odometers, there exist sequences $(\mcB_n)$, $(\mcC_n)$ of Kakutani-Rokhlin partitions for $f$, $g$ respectively, which each generate $\Clopen(X)$ and are such that for all $n$ $\mcB_n$ and $\mcC_n$ each have exactly one column for all $n$, and the height of $\mcB_n$ is equal to the height of $\mcC_n$. We may additionally ensure that the bases of $\mcB_n$, $\mcC_n$ shrink to the same point which belongs to $\base(\mcA)$. There exists $n_0$ such that for all $n \ge n_0$ every atom of $\mcA$ is a union of atoms of both $\mcB_n$ and $\mcC_n$; it follows that both $\mcB_n$ and $\mcC_n$ refine $\mcA$ as $K$-partitions. One can then build a sequence (starting at $n_0$) of refining partial isomorphisms $h_n \colon \mcB_n \to \mcC_n$ such that $h_n(\alpha)=\alpha$ for every atom of $\mcA$, and for every atom $\beta$ of $\mcC_n$ one has $h_n f h_n ^{-1}(\beta)= g(\beta)$. The union of the sequence $(h_n)_{n \ge n_0}$ defines an automorphism $h$ of $\Clopen(X)$ such that $hfh^{-1}(U)=g(U)$ for every clopen $U$, and $h(\alpha)=\alpha$ for every atom $\alpha$ of $\mcA$. Hence there exists $h \in G_\mcA$ such that $hf h^{-1}=g$.

We just proved that the map $G_K \to \Omega(g)$, $h \mapsto hgh^{-1}$ is open, and then Effros' theorem (see e.g.~\cite[Theorem~3.2.4]{Gao2009a}) yields that $\Omega(g)$ is comeager in $\overline{\Omega(g)}= G_{\{\mu\}}^*$.
\end{proof}

The following consequence of Theorem \ref{t:comeager_orbits} was pointed out by A. Yingst.
\begin{cor}[Yingst]
Let $K$ be a dynamical simplex. There does not exist a comeager conjugacy class in $G_K^*$ exactly when some measure in $K$ gives some clopen set an irrational measure.
\end{cor}

\begin{proof}
A measure $\mu$ is the unique invariant measure associated to an odometer iff $\mu$ is good and assigns a rational measure to every clopen set (see \cite{Akin2005}*{Theorem~2.16}). This proves one implication above, as well as the converse implication in the particular case when $K$ is a singleton.

Assume now that $K$ is not a singleton, and fix $A \in \Clopen(X)$, $\mu_1, \mu_2 \in K$ such that $\mu_1(A) \ne \mu_2(A)$. Since $K$ is connected and $A \mapsto \mu(A)$ is continuous, there exists $\mu \in K$ such that $\mu(A)$ is irrational. 
\end{proof}

\bibliography{mybiblio}

\end{document}